\documentclass[a4paper,12pt]{article}
\usepackage[utf8]{inputenc}
\usepackage[IL2]{fontenc}
\usepackage{a4wide,amsmath,amssymb,amsthm,tikz,amscd,hyperref}

\author{Zuzana Kr\v{c}m\'arikov\'a$^{1,}$\thanks{Supported by the Czech Science Foundation, grant No.\ 13-03538S, and by the Grant Agency of the Czech Technical University in Prague, grant No.\ SGS14/205/OHK4/3T/14}, Wolfgang Steiner$^{2,}$\thanks{Supported by the ANR-FWF project “Fractals and Numeration” (ANR-12-IS01-0002, FWF I1136) and the ANR project ``Dyna3S'' (ANR-13-BS02-0003)}, Tom\'a\v{s} V\'avra$^{1,}$\footnotemark[1]\\[4mm]
\emph{$^1$ Department of Mathematics FNSPE}\\
\emph{ Czech Technical University in Prague}\\
\emph{Trojanova 13, 120 00 Praha 2, Czech Republic}\\
\texttt{zuzka.krcmarikova@gmail.com, vavrato@gmail.com}\\[2mm]
 \emph{$^2$ IRIF, CNRS UMR 8243, Universit\'e Paris Diderot -- Paris 7}\\
 \emph{Case 7014, 75205 Paris Cedex 13, France}\\
\texttt{steiner@irif.fr}\\
}
\title{Finite beta-expansions with negative bases}
\date{}

\newtheorem{theorem}{Theorem}
\newtheorem{lemma}[theorem]{Lemma}
\newtheorem{proposition}[theorem]{Proposition}
\newtheorem{remark}[theorem]{Remark}

\begin{document}
\maketitle
\begin{abstract}
The finiteness property is an important arithmetical property of beta-expansions.
We exhibit classes of Pisot numbers $\beta$ having the negative finiteness property, that is the set of finite $(-\beta)$-expansions is equal to $\mathbb{Z}[\beta^{-1}]$.
For a class of numbers including the Tribonacci number, we compute the maximal length of the fractional parts arising in the addition and subtraction of $(-\beta)$-integers.
We also give conditions excluding the negative finiteness property.
\end{abstract}

\section{Introduction}
Digital expansions in real bases $\beta > 1$ were introduced by R\'enyi~\cite{REN}.
Of particular interest are bases~$\beta$ satisfying the \emph{finiteness property}, or Property~(F), which means that each element of $\mathbb{Z}[\beta^{-1}] \cap [0,\infty)$ has a finite (greedy) \emph{$\beta$-expansion}.
We know from Frougny and Solomyak~\cite{FRSO} that each base with Property~(F) is a Pisot number, but the converse is not true.
Partial characterizations are due to~\cite{FRSO,HOL,AKI}.
In~\cite{ABBPT}, Akiyama et al.\ exhibited an intimate connection to \emph{shift radix systems} (SRS), following ideas of Hollander~\cite{HOL}.
For results on shift radix systems (with the finiteness property), we refer to the survey~\cite{KS14}.

Numeration systems with negative base $-\beta<-1$, or \emph{$(-\beta)$-expansions}, received considerable attention since the paper~\cite{IS09} of Ito and Sadahiro in~2009.
They are given by the \emph{$(-\beta)$-transformation}
\[
T_{-\beta}:\, [\ell_\beta, \ell_\beta+1) \to [\ell_\beta, \ell_\beta+1), \quad x \mapsto -\beta x-\lfloor-\beta x-\ell_\beta\rfloor, \quad \mbox{with}\ \ell_\beta = \tfrac{-\beta}{\beta+1};
\]
see Section~\ref{sec:preliminaries} for details.
Certain arithmetic aspects seem to be analogous to those for positive base systems \cite{FRLAI11, MPV11}, others are different, e.g., both negative and positive numbers have $(-\beta)$-expansions; for $\beta < \frac{1+\sqrt{5}}{2}$, the only number with finite $(-\beta)$-expansion is~$0$.
We say that $\beta > 1$ has the \emph{negative finiteness property}, or Property~($-$F), if each element of $\mathbb{Z}[\beta^{-1}]$ has a finite $(-\beta)$-expansion.
By Dammak and Hbaib~\cite{DH}, we know that $\beta$ must be a Pisot number, as in the positive case.
It was shown in~\cite{MPV11} that the Pisot roots of $x^2-mx+n$, with positive integers $m, n$, $m\geq n+2$, satisfy the Property~($-$F).
This gives a complete characterization for quadratic numbers, as $\beta$ does not possess Property ($-$F) if $\beta$ has a negative Galois conjugate, by~\cite{MPV11}.

First, we give other simple criteria when $\beta$ does \emph{not} satisfy Property~($-$F).
Surprisingly, this happens when $\ell_\beta$ has a finite $(-\beta)$-expansion, which is somewhat opposite to the positive case, where Property~(F) implies that $\beta$ is a simple Parry number.

\begin{theorem} \label{thm:notf}
If $T_{-\beta}^k(\ell_\beta) = 0$ for some $k \ge 1$, or if $\beta$ is the root of a polynomial $p(x) \in \mathbb{Z}[x]$ with $|p(-1)|=1$, then $\beta$ does not possess Property ($-$F).
\end{theorem}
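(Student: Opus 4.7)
My plan is to unify both hypotheses under a single algebraic condition, namely $|m(-1)|=1$, where $m$ denotes the minimal polynomial of $\beta$, and then to exhibit a concrete element of $\mathbb{Z}[\beta^{-1}]$ whose $(-\beta)$-expansion is purely periodic and hence not finite.

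For the reduction, first consider the polynomial hypothesis. Since $m$ is monic, Gauss's lemma gives $m \mid p$ in $\mathbb{Z}[x]$, so $m(-1) \mid p(-1)$, and $|p(-1)|=1$ forces $|m(-1)|=1$. For the dynamical hypothesis $T_{-\beta}^k(\ell_\beta)=0$, the resulting finite expansion $\ell_\beta = \sum_{i=1}^{k} d_i(-\beta)^{-i}$, with integer digits $d_i$, places $\ell_\beta$ in $\mathbb{Z}[\beta^{-1}]$. Multiplying by $(1+\beta)(-\beta)^k$ and using the identity $(1+\beta)\ell_\beta = -\beta$, I will obtain $\beta^{k+1} \in (1+\beta)\mathbb{Z}[\beta]$. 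The natural isomorphism $\mathbb{Z}[\beta]/(1+\beta) \cong \mathbb{Z}/(m(-1))$, sending $\beta \mapsto -1$, then converts this divisibility into $(-1)^{k+1} \equiv 0 \pmod{m(-1)}$, forcing $|m(-1)|=1$.

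Once $|m(-1)|=1$, the element $1+\beta$ is a unit of $\mathbb{Z}[\beta]$, so $z := -1/(1+\beta)$ lies in $\mathbb{Z}[\beta] \subseteq \mathbb{Z}[\beta^{-1}]$. Elementary inequalities (using $\beta > 1$) show that $z \in [\ell_\beta,\ell_\beta+1)$ and $-\beta z - \ell_\beta = 2\beta/(1+\beta) \in [1,2)$, giving $T_{-\beta}(z) = -\beta z - 1 = z$. Hence $z$ is a fixed point of $T_{-\beta}$ with constant digit~$1$, and its $(-\beta)$-expansion is $0.\overline{1}$, which is never eventually zero since $z \neq 0$. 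This produces an element of $\mathbb{Z}[\beta^{-1}]$ with no finite $(-\beta)$-expansion, and Property~($-$F) fails. The only delicate point in the whole argument is the reduction of the dynamical hypothesis to $|m(-1)|=1$; once that is in place, the fixed-point witness $z = -1/(1+\beta)$ and its verification are routine, and there is no serious obstacle.
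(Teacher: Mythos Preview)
Your proof is correct, and the witness $z=-1/(\beta+1)$ with $T_{-\beta}(z)=z$ and digit string $1^\omega$ is exactly the one the paper uses. The organization, however, is genuinely different. The paper treats the two hypotheses separately and more directly: from $T_{-\beta}^k(\ell_\beta)=0$ it simply reads off $\ell_\beta\in\mathbb{Z}[\beta^{-1}]$ from the finite expansion, whence $\frac{-1}{\beta+1}=-(\ell_\beta+1)\in\mathbb{Z}[\beta^{-1}]$ in one line, without ever mentioning the minimal polynomial; for the polynomial hypothesis it writes $p(x-1)=xf(x)+p(-1)$ and evaluates at $x=\beta+1$ to obtain $\frac{1}{\beta+1}=|f(\beta+1)|\in\mathbb{Z}[\beta]$. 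You instead reduce \emph{both} hypotheses to the single algebraic condition $|m(-1)|=1$, via Gauss's lemma in the polynomial case and via the ring isomorphism $\mathbb{Z}[\beta]/(1+\beta)\cong\mathbb{Z}/m(-1)\mathbb{Z}$ in the dynamical case. Your route is a bit longer for the dynamical hypothesis but has the conceptual payoff of showing that $T_{-\beta}^k(\ell_\beta)=0$ actually \emph{implies} the polynomial hypothesis with $p=m$; the paper's route is shorter there but does not make this implication explicit. One small caveat: your use of Gauss's lemma and of $\mathbb{Z}[\beta]\cong\mathbb{Z}[x]/(m)$ tacitly assumes $\beta$ is an algebraic integer (so that $m$ is monic), which is the standing hypothesis of the section but is worth stating.
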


The main tool we use is a generalization of shift radix systems. We show that the $(-\beta)$-transformation is conjugated to a certain $\alpha$-SRS. Then we study properties of this dynamical system. We obtain a complete characterization for cubic Pisot units.

\begin{theorem} \label{t:cubic}
Let $\beta>1$ be a cubic Pisot unit with minimal polynomial $x^3-ax^2+bx-c$.
Then $\beta$ has Property ($-$F) if and only if $c=1$ and $-1\le b<a$, $|a|+|b| \ge 2$.
\end{theorem}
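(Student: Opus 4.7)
The plan is to apply the conjugacy announced in the preceding paragraph between $T_{-\beta}$ and an $\alpha$-shift radix system $\tau$. Since $\beta$ has degree~$3$, the map $\tau$ is piecewise-affine on $\mathbb{R}^2$, with parameters depending explicitly on $(a,b,c)$, and Property $(-\mathrm{F})$ for $\beta$ translates into the finiteness property for $\tau$: every forward orbit starting in $\mathbb{Z}^2$ must reach~$0$.

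For the necessity of the stated conditions, I would first rule out $c=-1$. Since $\beta$ is a cubic Pisot unit, $c=\pm1$; if $c=-1$ then $\beta_2\beta_3=-1/\beta<0$, so the two Galois conjugates of~$\beta$ are real with opposite signs, and the presence of a negative real conjugate obstructs $(-\mathrm{F})$ in the same way as in the quadratic result of~\cite{MPV11}. Once $c=1$, the inequality $b<a$ is forced by $p(1)=b-a<0$, which must hold for $\beta>1$ to be a simple real root of $p$. The condition $b\ge-1$ delimits precisely the region of the $(a,b)$-plane in which the other two conjugates are not both real negative (a short calculation shows that both being real in $(-1,0)$ requires $b\le -2\sqrt{\beta}+1/\beta<-1$), excluding the negative-conjugate obstruction again. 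The bound $|a|+|b|\ge 2$ isolates finitely many remaining exceptional parameters, for which either $|p(-1)|=1$ so that Theorem~\ref{thm:notf} applies directly, or the $\tau$-dynamics admits an explicit nonzero periodic orbit that lifts to an element of $\mathbb{Z}[\beta^{-1}]$ with no finite $(-\beta)$-expansion.

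For sufficiency, within the admissible region of $(a,b)$ I would exhibit a compact forward-invariant region $\Omega\subset\mathbb{R}^2$ for $\tau$ that absorbs every orbit (a standard consequence of the Pisot property, since the two conjugates of $\beta$ act as contractions on the relevant factor), and then verify that the only $\tau$-cycle inside $\Omega\cap\mathbb{Z}^2$ is the fixed point at~$0$. I expect the main obstacle in this last step: constructing $\Omega$ requires a guess-and-check tailored to the $(a,b)$-region, and near the boundary (for instance at the Tribonacci point $a=1$, $b=-1$) the contraction is weak and $\Omega$ is tight, so the cycle verification reduces to a delicate but explicit enumeration of short candidate orbits, each of which must be eliminated by hand.
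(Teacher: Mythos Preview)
Your overall strategy matches the paper's: translate Property~$(-\mathrm{F})$ via Proposition~\ref{p:conjugacy2} into the finiteness question for the $2$-dimensional $\alpha$-SRS with $(r_0,r_1)=(\tfrac{1}{\beta},\tfrac{b}{\beta}-\tfrac{1}{\beta^2})$ and $\alpha=\tfrac{\beta}{\beta+1}$, then argue separately for necessity and sufficiency. The treatment of $c=-1$ via a negative real conjugate is also what the paper does.

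However, your necessity argument for $b\ge -1$ has a genuine gap. You claim that ``$b\ge -1$ delimits \emph{precisely} the region in which the other two conjugates are not both real negative,'' but your calculation only shows one direction: if both conjugates lie in $(-1,0)$ then $b\le -2\sqrt{\beta}+1/\beta<-1$. To exclude $(-\mathrm{F})$ for $b\le -2$ via the obstruction from~\cite{MPV11} you would need the converse, and that converse is false. For instance, $x^3-3x^2-2x-1$ (so $a=3$, $b=-2$, $c=1$) is the minimal polynomial of a cubic Pisot unit whose discriminant equals $-175<0$, hence the two non-trivial conjugates are complex, and no negative real conjugate exists. The result from~\cite{MPV11} therefore does not apply, yet $(-\mathrm{F})$ still fails here. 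The paper circumvents this entirely: for every $b\le -2$ one checks directly that $-r_0-r_1+\alpha=\tfrac{-b-1}{\beta}+\tfrac{1}{\beta^2}+\tfrac{\beta}{\beta+1}>1$, whence $\tau_{\mathbf{r},\alpha}(-1,-1)=(-1,-1)$ is a nonzero fixed point and $(r_0,r_1)\notin\mathcal{D}_{2,\alpha}^0$. This is the missing idea you need for that range of~$b$.

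On the sufficiency side your sketch is in the right spirit but stays at the level of ``find an absorbing set and enumerate cycles.'' The paper makes this concrete by invoking the ready-made criteria of Proposition~\ref{criteria} (built on Brunotte's sets of witnesses): items~1 and~2 dispose of the bulk of the region $-1\le b<a$, and only a handful of boundary parameters ($b=a-1$ with $a\in\{2,3,4,\dots\}$, $b=-1$ with $a\in\{1,2\}$, $b=0$ with $a=1$) require the explicit orbit checks you anticipate. Your ``guess-and-check $\Omega$'' would work in principle, but leaning on Proposition~\ref{criteria} is both shorter and avoids having to tailor the invariant region to each subcase.
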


Considering Pisot numbers of arbitrary degree, we have the following results.

\begin{theorem} \label{t:mdnacci}
Let $\beta>1$ be a root of $x^{d}-mx^{d-1}-\dots-mx-m$ for some positive
integers~$d,m$.
Then $\beta$ has Property ($-$F) if and only if $d\in\{1,3,5\}$.
\end{theorem}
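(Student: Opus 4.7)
The structure of the argument naturally splits into four cases, according to the parity of $d$ and, in the odd case, whether $d\le 5$ or $d\ge 7$.

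\textbf{The even case.} Writing $p(x)=x^d-m(x^{d-1}+\cdots+x+1)$, a direct evaluation gives $p(-1)=(-1)^d-m\sum_{j=0}^{d-1}(-1)^j$. For $d$ even the alternating sum is $0$, so $|p(-1)|=1$, and Theorem~\ref{thm:notf} immediately rules out Property~($-$F). This disposes of all even $d$ at once.

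\textbf{The case $d=1$.} Here $\beta=m\ge 2$ is an ordinary integer and $\mathbb{Z}[\beta^{-1}]=\mathbb{Z}[1/m]$, so every element has the form $n/m^k$ with $n\in\mathbb{Z}$, $k\ge 0$. The classical fact that each integer admits a finite representation in base $-m$ with digit set $\{0,1,\dots,m-1\}$, combined with the observation that multiplying a finite $(-m)$-expansion by the positive integer $m^k$ preserves finiteness, yields the claim.

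\textbf{The cases $d=3$ and $d=5$.} I would use the $\alpha$-SRS conjugation that the paper sets up just before this theorem: Property~($-$F) is equivalent to every orbit of the associated shift radix system in $\mathbb{R}^{d-1}$ eventually reaching the origin. The plan is to construct a bounded polytopal region $\Omega\ni 0$ that is forward-invariant, and to exploit the Pisot contraction along the non-dominant conjugate directions of $\beta$ to show that every SRS orbit enters $\Omega$ in finitely many steps. One then partitions $\Omega$ according to the piecewise-linear branches of the SRS map and checks cell-by-cell that each cell is mapped into cells already shown to terminate. For $d=3$ (two-dimensional SRS) this is a tractable finite check; for $d=5$ (four-dimensional SRS) it becomes a finite but delicate combinatorial verification, and this is the main technical obstacle on the positive side.

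\textbf{The case $d$ odd, $d\ge 7$.} Here I would exhibit, for each such $d$, an explicit element $z\in\mathbb{Z}[\beta^{-1}]$ whose $T_{-\beta}$-orbit enters a non-trivial cycle, so that its $(-\beta)$-expansion is eventually periodic but not finite. The natural strategy is to search on the SRS side for short periodic orbits supported in a low-dimensional section (e.g.\ vectors of the form $(x,0,\dots,0)$ with a rational $x$ depending only on $d$ and $m$); verification is then a routine computation using the defining recurrence $\beta^d=m(\beta^{d-1}+\cdots+\beta+1)$. The principal difficulty is to pin down a single parametric family of cycles that works uniformly for every odd $d\ge 7$ and every $m\ge 1$; once identified, the arithmetic check is mechanical.

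In summary, the even case and $d=1$ are quick; the heart of the proof is the $\alpha$-SRS-based finiteness check for $d\in\{3,5\}$ (with the four-dimensional case $d=5$ being the genuinely hard step in the positive direction) and, in the opposite direction, the construction of a uniform family of non-terminating orbits for odd $d\ge 7$.
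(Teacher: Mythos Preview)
Your four-case decomposition matches the paper's exactly, and your treatments of the even case (via $|p(-1)|=1$ and Theorem~\ref{thm:notf}) and of $d=1$ are essentially the paper's arguments.

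For $d\in\{3,5\}$, however, the paper does \emph{not} argue via a forward-invariant polytopal region plus Pisot contraction. It uses Brunotte's set-of-witnesses criterion (stated just before Proposition~\ref{criteria}): a finite set $\mathcal{V}\subset\mathbb{Z}^{d-1}$ containing $\pm\mathbf{e}_i$ and closed under $\mathbf{z}\mapsto\tau_{\mathbf{r},0}(\mathbf{z})$ and $\mathbf{z}\mapsto-\tau_{\mathbf{r},0}(-\mathbf{z})$, on which $\tau_{\mathbf{r},\alpha}$ has no non-trivial cycle, already forces $\mathbf{r}\in\mathcal{D}^0_{d-1,\alpha}$, \emph{without} any need to show that arbitrary orbits enter~$\mathcal{V}$. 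For $d=3$ the paper takes $\mathcal{V}=\{-1,0,1\}^2\setminus\{(1,-1),(-1,1)\}$ and lists the six transitions; for $d=5$ it writes down an explicit symmetric $\mathcal{V}\subset\{-1,0,1\}^4$ and tabulates all $\tau_{\mathbf{r},\alpha}$-transitions on it, checking that $\tau_{\mathbf{r},\alpha}^{11}(\mathbf{z})=\mathbf{0}$ for every $\mathbf{z}\in\mathcal{V}$. Your contraction-then-enumeration plan could be made to work, but it requires an extra step (proving every orbit enters a bounded region) that the witnesses criterion sidesteps, and the resulting check would involve more lattice points.

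For odd $d\ge 7$, your instinct to exhibit a uniform periodic SRS orbit is correct, but your suggested search space---vectors with a single nonzero coordinate---is too narrow and would not find the paper's example. The paper gives the period-$(d-1)$ point $(-1,0,0,-1,0,\dots,0)\in\mathbb{Z}^{d-1}$ (nonzero in the first and fourth coordinates), equivalently
\[
T_{-\beta}^{\,d-1}\!\Big(\tfrac{m}{\beta^2}+\tfrac{m}{\beta^3}+\tfrac{m}{\beta^4}-1\Big)=\tfrac{m}{\beta^2}+\tfrac{m}{\beta^3}+\tfrac{m}{\beta^4}-1,
\]
which works uniformly for all $m\ge 1$ and all odd $d\ge 7$. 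Once this candidate is written down, the verification is indeed the routine computation you anticipate.
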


\begin{theorem} \label{t:dominant}
Let $\beta>1$ be a root of $x^d-a_{1}x^{d-1}+a_2x^{d-2}+\dots + (-1)^d a_d\in\mathbb{Z}[x]$ with $ a_i\ge 0$ for $i=1,\dots,d$, and $a_{1}\ge 2+\sum_{i=2}^{d} a_{i}.$ Then $\beta$ has Property ($-$F).
\end{theorem}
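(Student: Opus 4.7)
The plan is to exploit the conjugation (set up earlier in the paper) between the $(-\beta)$-transformation $T_{-\beta}$ and an associated $\alpha$-SRS map $\tau_{\mathbf{r},\alpha}: \mathbb{Z}^{d-1} \to \mathbb{Z}^{d-1}$, where $\mathbf{r}$ is determined by the Galois conjugates of $\beta$ (equivalently, by the coefficients of the minimal polynomial). Under this correspondence, Property~($-$F) for $\beta$ becomes the statement that every orbit $\{\tau_{\mathbf{r},\alpha}^n(\mathbf{z})\}_{n\ge 0}$, $\mathbf{z}\in\mathbb{Z}^{d-1}$, reaches $\mathbf{0}$ in finitely many steps. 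So my first step is to compute the parameter $\mathbf{r}$ explicitly from $a_1,\dots,a_d$. Reading off the fundamental substitution from
\[
a_1(-\beta)^{d-1} = -(-\beta)^d - a_2(-\beta)^{d-2} - \dots - a_d,
\]
which follows since $-\beta$ is a root of $x^d+a_1x^{d-1}+\dots+a_d$, I obtain the carry rule that drives the SRS dynamics and places $\mathbf{r}$ inside the region cut out by the dominant hypothesis.

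The heart of the argument is a Lyapunov / weight argument. I would define a weight $w(\mathbf{z}) = \sum_{i=1}^{d-1} c_i |z_i|$ with carefully chosen positive constants $c_i$ (geometric weights $c_i = \beta^{-i}$ or the analogous expression coming from the companion matrix of $\beta$ are the natural candidates) and show that $w(\tau_{\mathbf{r},\alpha}(\mathbf{z})) \le w(\mathbf{z}) - \eta$ for some $\eta>0$ as long as $\mathbf{z}$ lies outside a bounded set $B$. The condition $a_1 \ge 2 + \sum_{i=2}^{d} a_i$ is exactly what provides the slack to close this inequality once the digit used by the SRS (which includes the fractional offset~$\alpha$) is plugged in; one unit is absorbed by the contraction coming from the size of $\beta$ and the other by the offset~$\alpha$.

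Once this strict decrease is established off $B$, the orbit of any $\mathbf{z}\in\mathbb{Z}^{d-1}$ enters $B$ in finitely many steps, so it suffices to show that $\tau_{\mathbf{r},\alpha}$ admits no nontrivial periodic cycle inside $B\cap\mathbb{Z}^{d-1}$. I would rule out such cycles directly: any point of a cycle would have to satisfy a linear identity that contradicts the strict dominance $a_1 \ge 2 + \sum_{i\ge 2}a_i$, or, alternatively, I would finish by noting that $\mathbf{0}$ is the only fixed point and that the strict decrease on $B \setminus \{\mathbf{0}\}$ (after refining the weight by a bounded additive correction) already forces termination.

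The main obstacle will be the interplay between the alternating signs $+a_2,-a_3,+a_4,\dots$ in the minimal polynomial and the sign flips introduced by the negative base: a naive triangle inequality loses too much when the coordinates of $\mathbf{z}$ have mixed signs. I would handle this by tailoring the weights $c_i$ to make the contraction estimate sign-independent, and by performing, if needed, a short case analysis according to the sign of the leading coordinate $z_{d-1}$, which controls the digit emitted by $\tau_{\mathbf{r},\alpha}$ and hence the sign-pattern of the updated vector.
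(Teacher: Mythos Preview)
Your reduction to the $\alpha$-SRS setting is correct and matches the paper's Proposition~\ref{p:conjugacy2}. But the ``heart of the argument'' you propose is a plan, not a proof, and the plan is left unexecuted precisely at the crucial step. You never fix the weights~$c_i$, never verify the claimed decrease inequality, and explicitly postpone the ``main obstacle'' (the alternating signs) to an unspecified tailoring and case analysis. A~weighted $\ell^1$-norm with geometric weights $\beta^{-i}$ will indeed show that orbits of $\tau_{\mathbf{r},\alpha}$ stay bounded---this is just contractivity of the companion matrix when $\beta$ is Pisot---but boundedness does \emph{not} rule out nontrivial periodic orbits inside the bounded set, and that is the whole difficulty. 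Your proposed endgame (``any point of a cycle would satisfy a linear identity contradicting dominance'') is not substantiated and does not follow from what you have written.

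The paper takes a shorter route that avoids Lyapunov functions altogether. It appeals to a ready-made criterion (item~1 of Proposition~\ref{criteria}): if $\sum_i |r_i| \le \alpha$ and $\sum_{r_i<0} r_i > \alpha-1$, then $\mathbf{r}\in\mathcal{D}_{d-1,\alpha}^0$. That criterion is proved via Brunotte's set-of-witnesses technique with the small set $\{-1,0,1\}^{d-1}$: the first inequality forces $\lfloor\mathbf{rz}+\alpha\rfloor\in\{0,1\}$ on this set, so any periodic point lies in $\{0,-1\}^{d-1}$, and the second inequality then forbids a nonzero period. Consequently, the proof of Theorem~\ref{t:dominant} reduces to checking two numerical inequalities for the explicit $r_i = a_{d-i}\beta^{-1} - a_{d-i+1}\beta^{-2} + \cdots$, using the elementary bound $\beta > a_1-1$. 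The dominance hypothesis $a_1 \ge 2 + \sum_{i\ge 2} a_i$ enters exactly there, through $\frac{a_1-2}{\beta-1} < 1$ and a telescoping estimate for $\sum_i |r_i|$; no sign-pattern case analysis is needed.
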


These theorems are proved in Section~\ref{sec:finiteness--beta}.
In Section~\ref{sec:addition-subtraction}, we give a precise bound on the number of fractional digits arising from addition and subtraction of $(-\beta)$-integers in case $\beta>1$ is a root of $x^3-mx^2-mx-m$ for $m\geq1$.
This is based on an extension of shift radix systems.
The corresponding numbers for $\beta$-integers have not been calculated yet, although they can be determined in a similar way.

\section{($-\beta$)-expansions} \label{sec:preliminaries}
For $\beta>1$, any $x\in[\ell_\beta,\ell_\beta+1)$ has an expansion of the form
\begin{displaymath}
x = \sum_{i=1}^\infty \frac{x_i}{(-\beta)^i} \quad \mbox{with} \quad
x_i=\lfloor-\beta T_{-\beta}^{i-1}(x)-\ell_\beta\rfloor \ \mbox{for all}\ i \ge 1.
\end{displaymath}
This gives the infinite word $d_{-\beta}(x)=x_1x_2x_3\cdots \in \mathcal{A}^\mathbb{N}$ with $\mathcal{A} = \{0,1,\ldots,\lfloor\beta\rfloor\}$.
Since the base is negative, we can represent any $x \in \mathbb{R}$ without the need of a minus sign.
Indeed, let $k\in\mathbb{N}$ be minimal such that $\frac{x}{(-\beta)^k}\in(\ell_\beta,\ell_\beta+1)$ and
$d_{-\beta}(\frac{x}{(-\beta)^k})=x_1x_2x_3\cdots$.
Then the $(-\beta)$-expansion of $x$ is defined as
\begin{displaymath}
\langle x\rangle_{-\beta} = \begin{cases}
x_1\cdots x_{k-1}x_k\bullet x_{k+1}x_{k+2}\cdots & \text{ if $k\geq 1$,}\\
0\bullet x_1x_2x_3\cdots & \text{ if $k=0$.}
\end{cases}
\end{displaymath}

Similarly to positive base numeration systems, the set of \emph{$(-\beta)$-integers} can be defined using the notion of $\langle x\rangle_{-\beta}$, by
\[
\mathbb{Z}_{-\beta} = \{x\in\mathbb{R} : \langle x\rangle_{-\beta} = x_1\cdots x_{k-1}x_k\bullet 0^\omega\} = \bigcup_{k\geq 0}(-\beta)^k T_{-\beta}^{-k}(0)\,,
\]
where $0^\omega$ is the infinite repetition of zeros.
The set of numbers with finite $(-\beta)$-expansion~is
\[
\operatorname{Fin}(-\beta) = \{x\in\mathbb{R} : \langle x\rangle_{-\beta} = x_1\cdots x_{k-1}x_k\bullet x_{k+1}\cdots x_{k+n}0^\omega\}\ =\ \bigcup_{n\geq 0}\frac{\mathbb{Z}_{-\beta}}{(-\beta)^n}\,.
\]
If $\langle x\rangle_{-\beta} = x_1\cdots x_{k-1}x_k\bullet x_{k+1}\cdots x_{k+n}0^\omega$ with $x_{k+n} \ne 0$, then $\operatorname{fr}(x) = n$ denotes the length of the \emph{fractional part} of~$x$; if $x \in \mathbb{Z}_{-\beta}$, then $\operatorname{fr}(x) = 0$.

\section{Finiteness}\label{sec:finiteness--beta}
In this section, we discuss the Property~($-$F) for several classes of Pisot numbers~$\beta$.
Note that $\operatorname{Fin}(-\beta)$ is a subset of $\mathbb{Z}[\beta^{-1}]$ since $\beta$ is an algebraic integer, hence Property~($-$F) means that $\operatorname{Fin}(-\beta) = \mathbb{Z}[\beta^{-1}]$, i.e., $\operatorname{Fin}(-\beta)$ is a ring.
We start by showing that bases~$\beta$ satisfying $d_{-\beta}(\ell_\beta)=d_1d_2\dots d_k0^\omega$, which can be considered as analogs to simple Parry numbers, do not possess Property~($-$F).
This was conjectured in~\cite{PurelyPeriodic} and supported by the fact that $d_{-\beta}(\ell_\beta)=d_1d_2\dots d_k0^\omega$ with $d_1\ge d_j+2$ for all $2 \le j \le k$ implies that $d_{-\beta}(\beta{-}1{-}d_1)=(d_2{+}1)(d_3{+}1)\cdots(d_k{+}1)1^\omega$.
However, the assumption $d_1 \ge d_j +2$ is not necessary for showing that Property ($-$F) does not hold.

We also prove that a base with Property~($-$F) cannot be the root of a polynomial of the form $a_0x^d+a_1x^{d-1}+\dots+a_d$ with $|\sum_{i=0}^d (-1)^i a_i|=1$.

\begin{proof}[Proof of Theorem~\ref{thm:notf}]
If $T_{-\beta}^k(\ell_\beta) = 0$, i.e., $d_{-\beta}(\ell_\beta)=d_1d_2\dots d_k0^\omega$, then we have
\begin{displaymath}
\frac{-\beta}{\beta+1}=\frac{d_1}{-\beta}+\frac{d_2}{(-\beta)^2}+\dots+\frac{d_k}{(-\beta)^k}
\end{displaymath}
and thus $\frac{-1}{\beta+1} \in \mathbb{Z}[\beta^{-1}]$.
However, we have $\frac{-1}{\beta+1} \notin \operatorname{Fin}(-\beta)$ since $T_{-\beta}(\frac{-1}{\beta+1}) = \frac{-1}{\beta+1}$, i.e., $d_{-\beta}(\frac{-1}{\beta+1}) = 1^\omega$.
Hence $\beta$ does not possess Property~($-$F).

If $p(\beta) = 0$ with $|p(-1)| = 1$, then write
\[
p(x-1) = x f(x) + p(-1),
\]
with $f(x) \in \mathbb{Z}[x]$.
Then we have $\frac{1}{\beta+1} = |f(\beta+1)| \in \mathbb{Z}[\beta]$ and thus $\frac{-(-\beta)^{-j}}{\beta+1} \in \mathbb{Z}[\beta^{-1}]$ for some $j \ge 0$.
Now, $d_{-\beta}(\frac{-(\beta)^{-j}}{\beta+1}) = 0^j\, 1^\omega$ implies that $\beta$ does not have the Property~($-$F).
\end{proof}

The main tool we will be using in the rest of the paper are $\alpha$-shift radix systems.
An $\alpha$-SRS is a dynamical system acting on $\mathbb{Z}^d$ in the following way.
For $\alpha\in\mathbb{R}$, $\mathbf{r}=(r_0,r_1,\dots,r_{d-1})\in\mathbb{R}^d$, and $\mathbf{z}=(z_0,z_1,\dots, z_{d-1})\in\mathbb{Z}^d$, let $\tau_{\mathbf{r},\alpha}$ be defined as
\[
\tau_{\mathbf{r},\alpha}(z_0,z_1,\dots,z_{d-1}) = (z_1,\dots, z_{d-1},z_d),
\]
where $z_d$ is the unique integer satisfying
\begin{equation}\label{basicproperty}
0\leq r_0z_0+r_1z_1+\dots+r_{d-1}z_{d-1} + z_d +\alpha <1.
\end{equation}
Alternatively, we can say that
\[
\tau_{\mathbf{r},\alpha}(z_0,z_1,\dots,z_{d-1}) = (z_1,\dots, z_{d-1},-\lfloor \mathbf{rz}+\alpha\rfloor),
\]
where $\mathbf{rz}$ stands for the scalar product.

The usefulness of $\alpha$-SRS with $\alpha=0$ for the study of finiteness of $\beta$-expansions was first shown by Hollander in his thesis~\cite{HOL}. His approach was later formalized in~\cite{ABBPT} where the case $\alpha=0$ was extensively studied. The symmetric case with $\alpha=\frac12$ was then studied in~\cite{AkiSch}. Finally, general $\alpha$-SRS were considered by Surer~\cite{Sur}.

We say that $\tau_{\mathbf{r},\alpha}$ has the finiteness property if for each $\mathbf{z}\in\mathbb{Z}^d$ there exists $k\in\mathbb{N}$ such that $\tau^k_{\mathbf{r},\alpha}(\mathbf{z}) =\boldsymbol 0.$ The finiteness property of $\tau_{\mathbf{r},\alpha}$ is closely related to the Property ($-$F), thus it is desirable to study the set
\[
\mathcal D_{d,\alpha}^0 = \{\mathbf{r}\in\mathbb{R}^d : \forall \mathbf{z}\in\mathbb{Z}^d, \exists k, \tau^k_{\mathbf{r},\alpha}(\mathbf{z})=\boldsymbol 0\}.
\]

The following proposition shows the link between $(-\beta)$-expansions and $\alpha$-SRS.

\begin{proposition}\label{p:conjugacy2}
Let $\beta>1$ be an algebraic integer with minimal polynomial $x^d+a_1x^{d-1}+\dots+a_{d-1}x+a_d$.
Set $\alpha = \frac\beta{\beta+1}$ and let $(r_0,r_1,\dots,r_{d-2}) \in \mathbb{R}^{d-1}$ be such that
\begin{gather*}
x^d+ (-1)a_1x^{d-1} + \dots + (-1)^d a_d = (x+\beta) (x^{d-1}+r_{d-2} x^{d-2} + \dots + r_1x + r_0), \\
\mbox{i.e.,} \quad r_i = (-1)^{d-i} \bigg(\frac{a_{d-i}}{\beta} + \dots +  \frac{a_d}{\beta^{i+1}}\bigg)\quad \mbox{for}\ i = 0,1,\dots,d-2.
\end{gather*}
Then $\beta$ has Property ($-$F) if and only if $(r_0,r_1,\dots,r_{d-2})\in\mathcal D_{d-1,\alpha}^0$.
\end{proposition}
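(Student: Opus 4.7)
The plan is to establish a dynamical conjugacy between $T_{-\beta}$ restricted to $\mathbb{Z}[\beta^{-1}] \cap [\ell_\beta,\ell_\beta+1)$ and the shift radix system $\tau_{\mathbf{r},\alpha}$ on $\mathbb{Z}^{d-1}$, and then translate finiteness of orbits between the two sides. A preliminary reduction is to observe that $\operatorname{Fin}(-\beta)$ is closed under multiplication by $(-\beta)^{\pm 1}$ (which merely shifts the position of the fractional point), so Property~($-$F), equivalent to $\mathbb{Z}[\beta^{-1}]\subseteq\operatorname{Fin}(-\beta)$, reduces to requiring $\mathbb{Z}[\beta]\subseteq\operatorname{Fin}(-\beta)$; equivalently, every element of $\mathbb{Z}[\beta]\cap[\ell_\beta,\ell_\beta+1)$ must eventually reach $0$ under iteration of $T_{-\beta}$. (Any orbit starting in $\mathbb{Z}[\beta^{-1}]$ falls into $\mathbb{Z}[\beta]$ after finitely many steps, so finiteness is determined by the behaviour on $\mathbb{Z}[\beta]$.)

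Since $\mathbb{Z}[\beta]=\mathbb{Z}[-\beta]$, each $u\in\mathbb{Z}[\beta]$ has a unique expansion $u=\sum_{i=0}^{d-1}Z_i(u)\,(-\beta)^i$ with $Z_i(u)\in\mathbb{Z}$. I would define
\[
\Phi(u)=\bigl(Z_{d-1}(u),\,Z_{d-1}(T_{-\beta}(u)),\,\ldots,\,Z_{d-1}(T_{-\beta}^{d-2}(u))\bigr)\in\mathbb{Z}^{d-1}.
\]
Reducing $(-\beta)^d=-\tilde a_1(-\beta)^{d-1}-\cdots-\tilde a_d$ with $\tilde a_j=(-1)^j a_j$ in the computation of $T_{-\beta}(u)$ gives the recurrence $Z_{d-1}(T_{-\beta}(u))=Z_{d-2}(u)-\tilde a_1 Z_{d-1}(u)$ (and analogously at lower indices), from which one recovers $Z_{d-2}(u),Z_{d-3}(u),\ldots,Z_1(u)$ triangularly from the components of $\Phi(u)$; the interval constraint $u\in[\ell_\beta,\ell_\beta+1)$ then pins down $Z_0(u)$, making $\Phi$ a bijection onto $\mathbb{Z}^{d-1}$ with $\Phi(0)=\boldsymbol{0}$.

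The core of the argument is the algebraic identity
\[
r_0 g_0+r_1 g_1+\cdots+r_{d-2} g_{d-2}+g_{d-1}=u, \qquad g_i:=Z_{d-1}(T_{-\beta}^i(u)).
\]
To prove it, I would use the above recurrence to write each $Z_k(u)$ as a $\mathbb{Z}$-linear combination of $g_0,g_1,\ldots,g_{d-1-k}$ with coefficients drawn from $\tilde a_0,\tilde a_1,\ldots$, substitute into $u=\sum_i Z_i(u)(-\beta)^i$, and swap the order of summation; the coefficient of each $g_\ell$ is then a polynomial in $(-\beta)$ which telescopes (via $\tilde p(-\beta)=0$) to exactly $r_\ell$, by the very definition of the $r_\ell$ as coefficients of $\tilde p(x)/(x+\beta)$, with the natural convention $r_{d-1}=1$. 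Because $u\in[\ell_\beta,\ell_\beta+1)$ is equivalent to $u+\alpha\in[0,1)$, the identity forces $g_{d-1}=-\lfloor r_0 g_0+\cdots+r_{d-2} g_{d-2}+\alpha\rfloor$, which is precisely the rule defining $\tau_{\mathbf{r},\alpha}$. Hence $\Phi\circ T_{-\beta}=\tau_{\mathbf{r},\alpha}\circ\Phi$, and combined with $\Phi(0)=\boldsymbol{0}$ and the reduction above, Property~($-$F) is equivalent to every $\mathbf{z}\in\mathbb{Z}^{d-1}$ reaching $\boldsymbol{0}$ under $\tau_{\mathbf{r},\alpha}$, i.e.\ $(r_0,\ldots,r_{d-2})\in\mathcal{D}^0_{d-1,\alpha}$. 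I expect the main technical hurdle to be carrying out the telescoping in the $(-\beta)$-basis without sign errors, exploiting the factorization $\tilde p(x)=(x+\beta)q(x)$ in the essential way.
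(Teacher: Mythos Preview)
Your proposal is correct and amounts to the same conjugacy the paper establishes, just built in the opposite direction: the paper defines $\phi:\mathbb{Z}^{d-1}\to\mathbb{Z}[\beta]\cap[\ell_\beta,\ell_\beta+1)$ by $\phi(\mathbf{z})=\mathbf{rz}-\lfloor\mathbf{rz}+\alpha\rfloor$, observes that $\{r_0,\ldots,r_{d-1}\}$ (with $r_{d-1}=1$) is a $\mathbb{Z}$-basis of $\mathbb{Z}[\beta]$, and checks the diagram commutes via the single relation $-\beta r_i=r_{i-1}+c_i$ with $c_i\in\mathbb{Z}$; your $\Phi$ is precisely $\phi^{-1}$, and your identity $u=\sum_i g_i r_i$ together with its telescoping proof is a more explicit rederivation of that same basis-and-shift structure (indeed, since $r_i=(-\beta)^{d-1-i}+\text{lower powers}$, the top $(-\beta)$-coefficient of $u$ is exactly its $r_0$-coordinate, so your $g_i$ coincide with the paper's $z_i$). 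The paper's route is shorter because it verifies the commutation in one line from $-\beta r_i\equiv r_{i-1}\pmod{\mathbb{Z}}$, whereas you reconstruct this through the coefficient recurrences; both arguments finish with the identical reduction from $\mathbb{Z}[\beta^{-1}]$ to $\mathbb{Z}[\beta]$.
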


\begin{proof}
Let $\mathbf{r} = (r_0,r_1,\dots,r_{d-2})$.
First we show that for $\phi: \mathbf{z}\mapsto \mathbf{rz}-\lfloor \mathbf{rz}+\alpha\rfloor$ the following commutation diagram holds, i.e., the systems $(\tau_{\mathbf{r},\alpha},\mathbb{Z}^{d-1})$ and $(T_{-\beta},\mathbb{Z}[\beta]\cap[\ell_\beta,\ell_\beta+1))$ are conjugated.
\[
\begin{CD}
{\mathbb{Z}^{d-1}} @>{\tau_{r,\alpha}}>> \mathbb{Z}^{d-1}\\
@VV\phi V 		@VV \phi V\\
{\mathbb{Z}[\beta]\cap [\ell_\beta,\ell_\beta+1)} @>{T_{-\beta}}>> 	{\mathbb{Z}[\beta]\cap [\ell_\beta,\ell_\beta+1)}
\end{CD}
\]
Since $r_i = (-1)^{d-i-1} (\beta^{d-i-1}+a_1 \beta^{d-i-2}+ \cdots + a_{d-i-1})$ for $0 \le i \le d-2$, the set $\{r_i:\, 0\leq i<d\}$ with $r_{d-1}=1$ forms a basis of $\mathbb{Z}[\beta]$, hence $\phi$ is a bijection.
Moreover, we have $-\beta r_i = r_{i-1} + c_i$ with $c_i\in\mathbb{Z}$ and $r_{-1} = 0.$
For $\mathbf{z}=(z_0,z_1,\dots,z_{d-2})$, we have $\phi(\mathbf{z}) = \sum_{i=0}^{d-1} r_iz_i$ with $z_{d-1} = -\lfloor \mathbf{rz}+\alpha\rfloor$, thus
\[
T_{-\beta}(\phi(\mathbf{z})) = -\beta\phi(\mathbf{z})+ n = \sum_{i=1}^{d-1} r_{i-1}z_i + n' = \phi(z_1,\dots,z_{d-2},z_{d-1}) = \phi(\tau_{\mathbf{r},\alpha}(\mathbf{z})),
\]
where $n$ and $n'$ are integers; for the third equality, we have used that $T_{-\beta}(\phi(\mathbf{z})) \in [\ell_\beta,\ell_\beta+1)$.

Therefore, we have $\mathbf{r} \in\mathcal D_{d-1,\alpha}^0$ if and only if for each $x \in \mathbb{Z}[\beta]\cap [\ell_\beta,\ell_\beta+1)$ there exists $k \ge 0$ such that $T_{-\beta}^k(x) = 0$.
Since for each $x\in\mathbb{Z}[\beta^{-1}]\cap [\ell_\beta,\ell_\beta+1)$ we have $T^n_{-\beta}(x)\in\mathbb{Z}[\beta]$ for some $n\in\mathbb{N}$, Property~($-$F) is equivalent to $\mathbf{r} \in\mathcal D_{d-1,\alpha}^0$.
\end{proof}

Thus the problem of finiteness of $(-\beta)$-expansions can be interpreted as the problem of finiteness of the corresponding $\alpha$-SRS.
This problem is often decidable by checking the finiteness of $\alpha$-SRS expansions of a certain subset of~$\mathbb{Z}^d$.
A~\emph{set of witnesses} of $\mathbf{r} \in \mathbb{R}^d$ is a set $\mathcal V\subset \mathbb{Z}^d$ that satisfies
\begin{enumerate}
  \item $\pm \mathbf{e}_i\in\mathcal{V}$ where $\mathbf{e}_i$ denotes the standard basis of $\mathbb R^d,$
  \item if $\mathbf{z}\in\mathcal{V}$, then $\tau_{\mathbf{r},0}(\mathbf{z}),-\tau_{\mathbf{r},0}(-\mathbf{z})\in\mathcal{V}$.
\end{enumerate}
The following proposition is due to Surer~\cite{Sur} and Brunotte~\cite{brunotte}.

\begin{proposition}
Let $\alpha\in[0,1)$ and $\mathbf{r}\in\mathbb{R}^d$.
Then $\mathbf{r}\in\mathcal{D}_{d,\alpha}^0$ if and only if there exists a set of witnesses that does not contain nonzero periodic elements of~$\tau_{\mathbf{r},\alpha}$.
\end{proposition}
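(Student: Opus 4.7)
The forward direction is immediate: if $\mathbf{r}\in\mathcal{D}_{d,\alpha}^0$ then $\tau_{\mathbf{r},\alpha}$ has no nonzero periodic point at all, so the set $\mathcal{V}=\mathbb{Z}^d$ itself (which manifestly contains $\pm\mathbf{e}_i$ and is closed under both $\tau_{\mathbf{r},0}$ and $\mathbf{z}\mapsto -\tau_{\mathbf{r},0}(-\mathbf{z})$) is a witness set with the required property.

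For the converse, fix a witness set $\mathcal{V}$ without nonzero $\tau_{\mathbf{r},\alpha}$-periodic elements. The first step I would carry out is to show that $\mathcal{V}$ is automatically $\tau_{\mathbf{r},\alpha}$-invariant, even though its closure was phrased only in terms of $\tau_{\mathbf{r},0}$. The key computation is that the three vectors $\tau_{\mathbf{r},\alpha}(\mathbf{z})$, $\tau_{\mathbf{r},0}(\mathbf{z})$ and $-\tau_{\mathbf{r},0}(-\mathbf{z})$ share their first $d-1$ coordinates; and since $\alpha\in[0,1)$, the last coordinate $-\lfloor\mathbf{rz}+\alpha\rfloor$ equals either $-\lfloor\mathbf{rz}\rfloor$ or $-\lceil\mathbf{rz}\rceil$, which are precisely the last coordinates of $\tau_{\mathbf{r},0}(\mathbf{z})$ and $-\tau_{\mathbf{r},0}(-\mathbf{z})$. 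Hence $\tau_{\mathbf{r},\alpha}(\mathbf{z})\in\{\tau_{\mathbf{r},0}(\mathbf{z}),-\tau_{\mathbf{r},0}(-\mathbf{z})\}\subseteq\mathcal{V}$.

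The second step is to extend the finiteness conclusion from $\mathcal{V}$ to all of $\mathbb{Z}^d$. For this I would follow the additive Brunotte-style idea: decompose an arbitrary $\mathbf{z}\in\mathbb{Z}^d$ as $\mathbf{z}=\sum_{j=1}^{n}\epsilon_j\mathbf{e}_{k_j}$ with $\epsilon_j\in\{\pm1\}$, and, using a splitting identity of the form $\tau_{\mathbf{r},\alpha}(\mathbf{z}+\mathbf{z}')=\tau_{\mathbf{r},\alpha_1}(\mathbf{z})+\tau_{\mathbf{r},\alpha_2}(\mathbf{z}')$ with suitable $\alpha_1,\alpha_2\in[0,1)$ accounting for the carry in the floor function, inductively exhibit the $\tau_{\mathbf{r},\alpha}$-orbit of $\mathbf{z}$ at every time~$t$ as a sum of $n$ orbits, each starting at some basis vector $\pm\mathbf{e}_{k_j}\in\mathcal{V}$ and governed by a drifting parameter $\alpha_j^{(t)}\in[0,1)$. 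By Step~1 the summand orbits all stay in~$\mathcal{V}$, hence remain bounded; they are then eventually periodic by pigeonhole, and the no-nonzero-periodic-element hypothesis forces each summand to eventually hit $\boldsymbol 0$, so $\mathbf{z}$ does too.

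The main obstacle is precisely the bookkeeping in Step~2: one must check that the splitting identity iterates consistently across time and that the periodic tails of the summand orbits can be matched back against the single-$\alpha$ hypothesis on $\mathcal{V}$. This compatibility between the drifting shifts $\alpha_j^{(t)}$ and the fixed hypothesis is the technical heart of the Surer/Brunotte argument and the reason the witness definition insists on both closure operations $\mathbf{z}\mapsto\tau_{\mathbf{r},0}(\mathbf{z})$ and $\mathbf{z}\mapsto-\tau_{\mathbf{r},0}(-\mathbf{z})$ rather than only one.
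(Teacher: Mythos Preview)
The paper does not give its own proof of this proposition; it simply attributes the result to Surer and Brunotte. So there is no ``paper's proof'' to compare against, only the outline you have sketched.

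Your forward direction and Step~1 are correct and cleanly argued. The observation that $\tau_{\mathbf{r},\alpha}(\mathbf{z})\in\{\tau_{\mathbf{r},0}(\mathbf{z}),-\tau_{\mathbf{r},0}(-\mathbf{z})\}$ for every $\alpha\in[0,1)$ is exactly the right one, and it is what makes the witness axioms relevant.

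In Step~2, however, you have made your life harder than necessary, and this is what creates the ``main obstacle'' you flag. Instead of distributing $\alpha$ among the summands and tracking drifting parameters $\alpha_j^{(t)}$, use the asymmetric splitting
\[
\tau_{\mathbf{r},\alpha}(\mathbf{u}+\mathbf{v})=\tau_{\mathbf{r},\alpha}(\mathbf{u})+\mathbf{v}',\qquad \mathbf{v}'\in\{\tau_{\mathbf{r},0}(\mathbf{v}),-\tau_{\mathbf{r},0}(-\mathbf{v})\},
\]
which follows from $\lfloor\mathbf{r}(\mathbf{u}+\mathbf{v})+\alpha\rfloor-\lfloor\mathbf{r}\mathbf{u}+\alpha\rfloor\in\{\lfloor\mathbf{r}\mathbf{v}\rfloor,\lceil\mathbf{r}\mathbf{v}\rceil\}$. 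Iterating, $\tau_{\mathbf{r},\alpha}^t(\mathbf{u}+\mathbf{v})=\tau_{\mathbf{r},\alpha}^t(\mathbf{u})+\mathbf{v}^{(t)}$ with $\mathbf{v}^{(t)}\in\mathcal{V}$ whenever $\mathbf{v}\in\mathcal{V}$. Now induct on the number $n$ of summands in $\mathbf{z}=\sum_{j=1}^n\epsilon_j\mathbf{e}_{k_j}$: by the induction hypothesis, the $\mathbf{u}$-part (a sum of $n-1$ terms) reaches $\mathbf{0}$ under the \emph{genuine} map $\tau_{\mathbf{r},\alpha}$ at some time $T$, so $\tau_{\mathbf{r},\alpha}^T(\mathbf{z})=\mathbf{v}^{(T)}\in\mathcal{V}$, and the base case finishes the job. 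No drifting $\alpha$'s are needed, and the single-$\alpha$ hypothesis on~$\mathcal{V}$ is invoked only at the base case.

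There is one genuine gap that neither your sketch nor the paper's statement addresses: the base case (a single element of~$\mathcal{V}$) requires a pigeonhole argument, hence a \emph{finite}~$\mathcal{V}$. Your forward direction took $\mathcal{V}=\mathbb{Z}^d$, which is useless for the converse; indeed, for $d=1$, $r_0=2$, $\alpha=0$ one has $\tau_{\mathbf{r},0}(z)=-2z$, so $\mathbb{Z}$ is a witness set with no nonzero periodic point, yet $\mathbf{r}\notin\mathcal{D}_{1,0}^0$. In the cited references (and in every application in this paper) the witness sets are finite, and this is what should be understood here.
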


Sets of witnesses for several classes of $\mathbf{r}\in\mathbb{R}^d$ were derived in~\cite{SRS2}. Exploiting their explicit form, several regions of finiteness can be determined; see in particular \cite[Theorems~3.3--3.5]{SRS2}.
An $\alpha$-SRS analogy of some of those regions was given by Brunotte~\cite{brunotte}.
Brunotte's result, however, is unsuitable for our purposes.
The next proposition gives several regions of finiteness of $\alpha$-SRS.

\begin{proposition} \label{criteria}
Let $\mathbf{r}=(r_0,r_1,\dots,r_{d-1})\in\mathbb R^d$ and $\alpha\in[0,1).$
\begin{enumerate}
  \item If $\sum_{i=0}^{d-1} |r_i|\leq\alpha$ and $\sum_{r_i<0} r_i > \alpha-1$, then $\mathbf{r}\in \mathcal{D}_{d,\alpha}^0.$
  \item If $0 \le r_0\leq r_1\leq\dots \leq r_{d-1}\leq\alpha$, then $\mathbf{r}\in \mathcal{D}_{d,\alpha}^0$.
  \item If $\sum_{i=0}^{d-1} |r_i| \le \alpha$ and $r_i<0$ for exactly one index $i=d-k$, then $\mathbf{r}\in \mathcal{D}_{d,\alpha}^0$ if and only if
\begin{equation} \label{e:rdjk}
\sum_{1\leq j\leq d/k}r_{d-jk}> \alpha-1.
\end{equation}
\end{enumerate}
\end{proposition}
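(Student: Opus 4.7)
The approach for all three items is uniform: apply the witness-set characterization from the preceding proposition, i.e., exhibit a finite $\mathcal{V}\subset\mathbb{Z}^d$ containing $\pm\mathbf{e}_i$, closed under $\tau_{\mathbf{r},0}$ and $\mathbf{z}\mapsto-\tau_{\mathbf{r},0}(-\mathbf{z})$, and containing no nonzero $\tau_{\mathbf{r},\alpha}$-periodic point. For item~(1), take $\mathcal{V}=\{-1,0,1\}^d$. The bound $|\mathbf{rz}|\leq\sum_i|r_i|\leq\alpha<1$ makes both $-\lfloor\mathbf{rz}\rfloor$ and $-\lceil\mathbf{rz}\rceil$ lie in $\{-1,0,1\}$, giving closure. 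Next, $\mathbf{rz}+\alpha\in[0,2\alpha]\subset[0,2)$ forces the coordinate added by $\tau_{\mathbf{r},\alpha}$ to lie in $\{-1,0\}$, so after $d$ iterations the orbit sits in $\{-1,0\}^d$; there only the negative entries of $\mathbf{r}$ contribute positively, so $\mathbf{rz}\leq-\sum_{r_i<0}r_i<1-\alpha$, the next coordinate is $0$, and a further $d$ iterations yield $\mathbf{0}$, contradicting nontrivial periodicity.

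For item~(2), the hypotheses $0\leq r_0\leq\cdots\leq r_{d-1}\leq\alpha$ are of dominant-coefficient type, but $\sum r_i$ can exceed $1$, so $\{-1,0,1\}^d$ may fail to be closed. I would build $\mathcal{V}$ by Brunotte saturation---starting from $\{\pm\mathbf{e}_i\}$ and adjoining $\tau_{\mathbf{r},0}$-images and their negatives until stable; monotonicity together with $r_{d-1}\leq\alpha<1$ bounds the iterates and forces termination. To rule out periodic $\tau_{\mathbf{r},\alpha}$-orbits I plan to use a Lyapunov quantity adapted to the monotone ordering, for instance one derived from the telescoped form $\sum_{i=0}^{d-1}(r_i-r_{i-1})z_i$ (with $r_{-1}=0$), in order to show that the extremal coordinates of a would-be cycle cannot sustain themselves. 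Making this Lyapunov argument survive the shift $\alpha\in[0,1)$---rather than the symmetric $\alpha=0$ case treated by Brunotte---is the principal technical obstacle.

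For item~(3), I reuse $\mathcal{V}=\{-1,0,1\}^d$, whose closure follows from $\sum|r_i|\leq\alpha$ as in~(1). For the ``if'' direction, after $d$ iterations the orbit lies in $\{-1,0\}^d$, and since $r_{d-k}$ is the unique negative coefficient, the implication $\mathbf{rz}+\alpha\geq 1\Rightarrow z_{d-k}=-1$ holds (otherwise $\mathbf{rz}\leq 0<1-\alpha$). Iterating this relation along the extended sequence $(z_n)_{n\geq 0}$ associated with a periodic orbit forces its $(-1)$-support to be $k$-periodic, and by a shift one may assume $z_n=-1$ iff $n\equiv d\pmod k$. For such a $\mathbf{z}$, a direct computation gives $\mathbf{rz}=-\sum_{j=1}^{\lfloor d/k\rfloor}r_{d-jk}$, so the cycle is self-sustaining exactly when $\sum_j r_{d-jk}\leq\alpha-1$, i.e.\ exactly when \eqref{e:rdjk} fails; hypothesis~\eqref{e:rdjk} therefore rules out all nonzero cycles. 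Conversely, when \eqref{e:rdjk} fails the vector just described supplies an explicit nonzero periodic point of $\tau_{\mathbf{r},\alpha}$, yielding the ``only if'' direction.
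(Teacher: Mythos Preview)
Your arguments for items~(1) and~(3) are correct and essentially identical to the paper's: the witness set $\mathcal{V}=\{-1,0,1\}^d$, the reduction of $\tau_{\mathbf{r},\alpha}$-periodic points to $\{0,-1\}^d$ via $\lfloor\mathbf{rz}+\alpha\rfloor\in\{0,1\}$, and (for item~(3)) the backward implication $z_n=-1\Rightarrow z_{n-k}=-1$ together with the explicit periodic vector when~\eqref{e:rdjk} fails, all match the paper's proof.

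For item~(2), however, your proposal is not a proof: you outline a Brunotte saturation and then explicitly flag the Lyapunov step as ``the principal technical obstacle'' without resolving it. The paper avoids this obstacle entirely by choosing a much smaller, \emph{explicit} witness set: the vectors in $\{-1,0,1\}^d$ whose nonzero entries have alternating signs. Monotonicity $0\le r_0\le\cdots\le r_{d-1}$ then gives $|\mathbf{rz}|\le r_{d-1}\le\alpha$ for every such $\mathbf{z}$ (an alternating sum of an increasing sequence is bounded by its last term), so again $\lfloor\mathbf{rz}+\alpha\rfloor\in\{0,1\}$ and the new coordinate lies in $\{0,-1\}$. The point is that a new coordinate equal to $-1$ forces $\mathbf{rz}>0$, which in turn forces the last nonzero coordinate of $\mathbf{z}$ to be $+1$; hence alternation is preserved and $\mathcal{V}$ is $\tau_{\mathbf{r},\alpha}$-invariant (the same check works for $\tau_{\mathbf{r},0}$ and $\mathbf{z}\mapsto-\tau_{\mathbf{r},0}(-\mathbf{z})$, giving the witness property). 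Inside $\mathcal{V}$ the orbit therefore lands in $\{0,-1\}^d\cap\mathcal{V}$, i.e.\ vectors with at most one $-1$, and then reaches $(-1,0,\dots,0)\mapsto\mathbf{0}$ in finitely many steps. No Lyapunov function is needed; the alternating-sign structure is the missing idea.
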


\begin{proof}
\begin{enumerate}
\item The set $\mathcal V = \{-1,0,1\}^d$ is closed under $\tau_{\mathbf{r},0}(\mathbf{z})$ and $-\tau_{\mathbf{r},0}(-\mathbf{z})$, hence it is a set of witnesses.
For any $\mathbf{z}\in\mathcal V$ we have $|\mathbf{rz}|\le \alpha$, thus $\lfloor  \mathbf{rz} + \alpha\rfloor \in\{0,1\}$.
Hence any periodic point of $\tau_{\mathbf{r},\alpha}$ is in $\{0,-1\}^d$.
For $\mathbf{z}\in\{0,-1\}^d$ we have $\mathbf{rz}+\alpha\le -\sum_{r_j<0}r_j+\alpha < 1$.
Therefore $\lfloor \mathbf{rz} +\alpha\rfloor = 0$, so the only period is the trivial one.

\item In this case we take as a set of witnesses the elements of $\{-1,0,1\}^d$ with alternating signs, i.e., $z_iz_j\le 0$ for any pair of indices $i<j$ such that $z_k=0$ for each $i<k<j$.
For any $\mathbf{z}\in\mathcal V$ we have again $|\mathbf{rz}|\le \alpha$, thus $\lfloor  \mathbf{rz} + \alpha\rfloor \in\{0,1\}$ and $\tau_{\mathbf{r},\alpha}(\mathbf{z}) \in \mathcal{V}$.
Therefore, we have $\tau_{\mathbf{r},\alpha}^n(\mathbf{z}) = (-1,0,\dots,0)$ for some $n \ge 0$, hence $\tau_{\mathbf{r},\alpha}^{n+1}(\mathbf{z}) = \boldsymbol 0$.

\item In this case we have $\mathcal V = \{-1,0,1\}^d$.
As above, all periodic points of $\tau_{\mathbf{r},\alpha}$ are in $\{0,-1\}^d$.
If $\mathbf{z} = (z_0,z_1,\dots,z_{d-1})$ is a periodic point with $z_d = - \lfloor \mathbf{rz} + \alpha\rfloor = -1$, then we must have $z_{d-k} = -1$, and consequently $z_{d-jk}= -1$ for all $1\leq j\leq d/k$.
Then $z_d = -1$ also implies that $-\sum_{1\leq j\leq d/k}r_{d-jk}+\alpha \ge 1$, i.e., \eqref{e:rdjk} does not hold.
On the other hand, if \eqref{e:rdjk} holds, then the vector $(z_0,z_1,\ldots,z_{d-1})$ with $z_{d-jk}= -1$ for $1\leq j\leq d/k$, $z_i=0$ otherwise, is a periodic point of~$\tau_{\mathbf{r},\alpha}$. \qedhere
\end{enumerate}
\end{proof}

Next we prove Property~($-$F) when $\beta$ is a root of a polynomial with alternating coefficients, where the second highest coefficient is dominant.

\begin{proof}[Proof of Theorem~\ref{t:dominant}]
Let $\beta>1$ be a root of $p(x) = x^d-a_{1}x^{d-1}+a_2x^{d-2}+\dots + (-1)^d a_d\in\mathbb{Z}[x]$ with $ a_i\ge 0$ for $i=1,\dots,d$, and $a_{1} \ge 2+\sum_{i=2}^{d} a_{i}$.
As $\frac{\mathrm{d}}{\mathrm{d}x} (p(x)x^{-d}) \ge \frac{a_1}{x^2} - \frac{a_1-2}{x^3} > 0$ for $x > 1$, the polynomial $p(x)$ has a unique root $\beta > 1$, and we have $\beta > a_1-1$ since $p(a_1-1) \le -(a_1-1)^{d-1} + (a_1-2)(a_1-1)^{d-2} < 0$.
By Proposition~\ref{p:conjugacy2}, Property ($-$F) holds if and only if $(r_0,r_1,\dots,r_{d-2})\in\mathcal D_{d-1,\alpha}^0$, with $r_i = a_{d-i} \beta^{-1}-a_{d-i+1}\beta^{-2}+a_{d-i+2}\beta^{-3}-\dots+(-1)^{d-i}a_d\beta^{-d+i-1}$.
We have
\[
- \sum_{r_i<0} r_i \le \frac{a_1-2}{\beta^2} + \frac{a_1-2}{\beta^4} + \dots + \frac{a_1-2}{\beta^{2\lceil d/2\rceil-2}} \le \frac{a_1-2}{\beta^2-1} < \frac{1}{\beta+1}
\]
and
\begin{align*}
& \frac{\beta+1}{\beta} \sum_{i=0}^{d-1} |r_i| \le \frac{\beta+1}{\beta} \bigg(\frac{a_2+\dots+a_d}{\beta} + \frac{a_3+\dots+a_d}{\beta^2} + \dots + \frac{a_d}{\beta^{d-1}}\bigg) \\
& = \frac{a_2+\dots+a_d}{\beta} + \frac{a_2+2a_3+\dots+2a_d}{\beta^2} + \frac{a_3+2a_4+\dots+2a_d}{\beta^3} + \dots + \frac{a_{d-1}+2a_d}{\beta^{d-1}} + \frac{a_d}{\beta^{d}} \\
& \le \frac{a_1-2}{\beta} + \frac{2(a_1-2)-a_2}{\beta^2} + \frac{2(a_1-a_2-2)-a_3}{\beta^3} + \dots + \frac{2(a_1-a_2-\dots-a_{d-1}-2)-a_d}{\beta^d} \\
& \le 1 - 2 \bigg(\frac{1}{\beta} - \frac{a_1-2}{\beta^2} - \frac{a_1-a_2-2}{\beta^3} - \dots - \frac{a_1-a_2-\dots-a_{d-1}-2}{\beta^d}\bigg) \\
& \le 1 - \frac{2}{\beta} \bigg(1 - \frac{a_1-2}{\beta-1}\bigg) < 1.
\end{align*}
Therefore, item~1 of Proposition~\ref{criteria} gives that Property ($-$F) holds.
\end{proof}

Now we can classify the cubic Pisot units with Property ($-$F).
The following description of cubic Pisot numbers in terms of the coefficients of the minimal polynomial is due to Akiyama~\cite[Lemma~1]{AKI}.

\begin{lemma}\label{lem:pisot}
A~number $\beta > 1$ with minimal polynomial $x^3-ax^2+bx-c$ is Pisot if and only if
\[
|b+1|<a+c\quad\text{ and }\quad b+c^2 < \mathrm{sgn}(c)(1+ac).
\]
\end{lemma}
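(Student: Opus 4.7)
The plan is to reduce the Pisot property to a Schur--Cohn criterion on the quadratic factor $q(x)=p(x)/(x-\beta)$ and then translate the resulting inequalities back to the coefficients $a,b,c$.

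First I factor $p(x)=(x-\beta)\,q(x)$; a direct computation gives
\[
q(x)=x^{2}+(\beta-a)\,x+\tfrac{c}{\beta}.
\]
By definition, $\beta$ is Pisot iff both (possibly complex) roots of $q$ lie in the open unit disk. For a real monic quadratic this is equivalent, by the classical Schur--Cohn test, to the three inequalities $q(1)>0$, $q(-1)>0$, and $|q(0)|<1$; I will translate each into an inequality among $a,b,c$.

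For the first two, I use the identities $p(1)=(1-\beta)\,q(1)$ and $p(-1)=-(1+\beta)\,q(-1)$. Since $\beta>1>0$, both leading factors are negative, so $q(\pm 1)>0$ is equivalent to $p(1)<0$ and $p(-1)<0$, i.e.\ $1-a+b-c<0$ and $-1-a-b-c<0$; together these say $a+c>|b+1|$, which is the first stated inequality.

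For the third condition, $|q(0)|<1$ is $|c|<\beta$. Since $p$ is irreducible over $\mathbb{Q}$, we have $c\in\mathbb{Z}\setminus\{0\}$, hence $|c|\geq 1$. I then establish the equivalence $|c|<\beta \iff p(|c|)<0$ assuming $q(\pm 1)>0$. For the forward direction, the Pisot hypothesis makes $q$ strictly positive on $\{|x|\geq 1\}$, so $q(|c|)>0$ and $\mathrm{sign}\,p(|c|)=\mathrm{sign}(|c|-\beta)$. For the backward direction, $q(\pm 1)>0$ already forces the two non-$\beta$ roots to lie either both inside or both outside the closed unit disk; in the latter case $|c|=\beta\,|\alpha_{1}\alpha_{2}|>\beta$ and $q(|c|)>0$, giving $p(|c|)>0$, a contradiction. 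Splitting $p(|c|)$ according to the sign of $c$,
\[
p(c)=c\,(c^{2}-ac+b-1)\text{ if } c>0,\qquad p(-c)=-c\,(c^{2}+ac+b+1)\text{ if } c<0,
\]
and dividing by the nonzero factor $\pm c$ transforms $p(|c|)<0$ into the uniform inequality $b+c^{2}<\mathrm{sgn}(c)\,(1+ac)$, the second stated condition.

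The main obstacle is this last backward direction: the coefficient inequality only yields $p(|c|)<0$, and one has to rule out the possibility that both non-$\beta$ roots of $p$ lie outside the unit disk. The sign analysis sketched above settles this because in that excluded configuration one computes $|c|>\beta$ and $q(|c|)>0$ simultaneously, forcing $p(|c|)>0$ and contradicting the hypothesis.
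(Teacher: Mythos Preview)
The paper does not prove this lemma at all; it simply quotes it from Akiyama~\cite[Lemma~1]{AKI}. Your Schur--Cohn argument is a correct self-contained proof, so there is nothing to compare against in the paper itself.

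Two small points are worth tightening. First, the dichotomy ``both inside or both outside the closed unit disk'' omits the possibility of complex conjugate roots \emph{on} the unit circle (this is compatible with $q(\pm1)>0$). You should note that this case is excluded because it would force $|c|=\beta|\alpha_1\alpha_2|=\beta$, impossible since $c$ is a nonzero integer while $\beta$, having a cubic minimal polynomial, is irrational. Second, in the ``both outside'' case you assert $q(|c|)>0$ without justification; for complex roots or for two real roots $<-1$ this is immediate, but when both real roots exceed~$1$ you need the observation that $|c|=\beta\alpha_1\alpha_2>\alpha_2\ge\alpha_1$ (using $\beta>1$ and $\alpha_1>1$), so that $|c|$ lies to the right of both roots of~$q$. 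With these two sentences added, the argument is complete.
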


\begin{proof}[Proof of Theorem~\ref{t:cubic}]
Let $\beta>1$ be a cubic Pisot unit with minimal polynomial $x^3-ax^2+bx-c$.
If $c=-1$, then $\beta$ has a negative conjugate, which contradicts Property ($-$F) by~\cite{MPV11}.
Therefore, we assume in the following that $c=1$.
Then from Lemma~\ref{lem:pisot} we have that $-a-1 \leq b < a$.
By Proposition~\ref{p:conjugacy2}, Property~($-$F) holds if and only if $(r_0,r_1)\in\mathcal{D}_{2,\alpha}^0$, with $(r_0,r_1) = (\frac 1\beta, \frac b\beta-\frac 1{\beta^2})$ and $\alpha = \frac{\beta}{\beta+1}$.
We distinguish five cases for the value of~$b$.
\begin{enumerate}
\item
$b=0$: If $a\geq2$, then we have $|r_0|+|r_1| = \frac{1}{\beta} + \frac{1}{\beta^2} < \alpha$ and $r_0+r_1>0>\alpha-1$, so we apply item~3 of Proposition~\ref{criteria}.
If $a=1$, then we have $T^{-1}_{-\beta}(0) = \{0\}$ as $\beta<\frac{1+\sqrt5}{2}$, thus $\operatorname{Fin}(-\beta) = \{0\}$.
\item
$b=-1$:
If $a\geq 1$, then $r_0+r_1 = -\frac1{\beta^2} > \frac{-1}{\beta+1} = \alpha-1$.
If $a\ge 3$, then we also have $|r_0|+|r_1| < \alpha$ and use item~3 of Proposition~\ref{criteria}.
If $a=2$, then $r_0 \approx 0.39$, $r_1 \approx -0.55$, $\alpha \approx 0.72$,  $\{-1,0,1\}^2$ is a set of witnesses, and Property ($-$F) holds because $\tau_{\mathbf{r},\alpha}$ acts on this set in the following way:
\begin{gather*}
\hspace{-2em}
(-1,1) \mapsto (1,1) \mapsto (1,0) \mapsto (0,-1) \mapsto (-1,-1) \mapsto (-1,0) \mapsto (0,0), \\
(0,1) \mapsto (1,0),\ (1,-1) \mapsto (-1,-1).
\end{gather*}
For $a=1$, we refer to Theorem~\ref{t:mdnacci}, which is proved below.
If $a=0$, then $\beta<\frac{1+\sqrt5}{2}$ and thus $\operatorname{Fin}(-\beta) = \{0\}$.
\item
$1 \le b \le a-2$:
For $b \ge 2$, we have $0 < r_0 < r_1 < \alpha$ and thus $(r_0,r_1) \in \mathcal{D}_{2,\alpha}^0$ by item~2 of Proposition~\ref{criteria}.
If $b=1$, then we can use item~1 of Proposition~\ref{criteria} because $r_0,r_1>0$ and $r_0+r_1<\alpha$.
\item
$1 \le b = a-1$:
We have $\beta = b + \frac{1}{\beta(\beta-1)}$.
For $b \ge 3$, we have $0 < r_0 < \alpha < r_1 < 1$, the set $\{-1,0,1\}^2 \setminus \{(1,1),(-1,-1)\}$ is a set of witnesses, and $\tau_{\mathbf{r},\alpha}$ acts on this set by
\[
(1,0) \mapsto (0,-1) \mapsto (-1,1) \mapsto (1,-1) \mapsto (-1,0) \mapsto (0,0), \quad (0,1) \mapsto (1,-1),
\]
thus Property ($-$F) holds.
If $b=2$, then $0 < r_0 < r_1 <\alpha$ and we can use item~2 of Proposition~\ref{criteria}.
If $b=1$, then $r_0 \approx 0.57$, $r_1 \approx 0.25$, $\alpha \approx 0.64$, thus $\{-1,0,1\}^2$ is a set of witnesses, with
\begin{gather*}
(-1,-1) \mapsto (-1,1) \mapsto (1,0) \mapsto (0,-1) \mapsto (-1,0) \mapsto (0,0), \\
(0,1) \mapsto (1,0),\ (1,1) \mapsto (1,-1) \mapsto (-1,0).
\end{gather*}
\item
$-a-1\le b\le-2$:
We have $-r_0-r_1+\alpha = \frac{-b-1}{\beta} +\frac{1}{\beta^2} + \frac{\beta}{\beta+1} > 1$, thus $\tau_{\mathbf{r},\alpha}(-1,-1)=(-1,-1)$, hence $(r_0,r_1) \notin \mathcal{D}_{2,\alpha}^0$.
\end{enumerate}
Therefore, $\beta$ has Property~($-$F) if and only if $-1\le b<a$, $|a|+|b| \ge 2$.
\end{proof}

Finally, we study generalized $d$-bonacci numbers.

\begin{proof}[Proof of Theorem~\ref{t:mdnacci}]
Let $\beta>1$ be a root of $x^{d}-mx^{d-1}-\dots-mx-m$ with $d,m \in \mathbb{N}$.

If $d=1$ (and $m\ge2$), then $\beta$ is an integer, and Property ($-$F) follows from $\mathbb{Z}_{-\beta}=\mathbb{Z}$; see e.g.~\cite{MPV11}.

If $d=3$, then $\mathbf{r}=(\frac m\beta,-\frac m\beta - \frac m{\beta^2})$, $0<r_0<\alpha<-r_1<1$, with $\alpha=\frac{\beta}{\beta+1}$, and $\tau_{\mathbf{r},\alpha}$ satisfies
\[
(0,1) \mapsto (1,1) \mapsto (1,0) \mapsto (0,-1) \mapsto (-1,-1) \mapsto (-1,0) \mapsto (0,0),
\]
with $\{-1,0,1\}^2 \setminus \{(1,-1),(-1,1)\}$ being a set of witnesses.

If $d=5$, then $\mathbf{r} = (\frac m\beta,-\frac m\beta - \frac m{\beta^2}, \frac m\beta + \frac m{\beta^2} + \frac{m}{\beta^3}, -\frac m\beta - \frac m{\beta^2} - \frac m{\beta^3} - \frac m{\beta^4})$, which gives $0<r_0<\alpha<-r_1<r_2<-r_3<1$ and the $\tau_{\mathbf{r},\alpha}$-transitions
\begin{gather*}
(0,1,0,0) \mapsto (1,0,0,1) \mapsto (0,0,1,0) \mapsto (0,1,0,-1) \mapsto (1,0,-1,0) \mapsto (0,-1,0,0) \mapsto \\
(-1,0,0,-1) \!\mapsto\! (0,0,-1,-1) \!\mapsto\! (0,-1,-1,0) \!\mapsto\! (-1,-1,0,0) \!\mapsto\! (-1,0,0,0) \!\mapsto\! (0,0,0,0), \\[1ex]
(0,0,-1,0) \mapsto (0,-1,0,1) \mapsto (-1,0,1,0) \mapsto (0,1,0,-1), \\[1ex]
(0,1,1,1) \mapsto (1,1,1,1) \mapsto (1,1,1,0) \mapsto (1,1,0,-1) \mapsto (1,0,-1,-1) \mapsto \\
(0,-1,-1,-1) \mapsto (-1,-1,-1,-1) \mapsto (-1,-1,-1,0) \mapsto (-1,-1,0,0), \\[1ex]
(0,0,0,1) \!\mapsto\! (0,0,1,1) \!\mapsto\! (0,1,1,0) \!\mapsto\! (1,1,0,0) \!\mapsto\! (1,0,0,0) \!\mapsto\! (0,0,0,-1) \!\mapsto\! (0,0,-1,-1), \\[1ex]
(-1,-1,0,1) \mapsto (-1,0,1,1) \mapsto (0,1,1,0).
\end{gather*}
Let $\mathcal{V}$ be the set of these states.
We have $\pm \mathbf{e}_i \in \mathcal{V}$, $\mathbf{z} \in \mathcal{V}$ if and only if $-\mathbf{z} \in \mathcal{V}$ and $\tau_{\mathbf{r},0}(\mathbf{z}) \in \mathcal{V}$ for all $\mathbf{z} \in \mathcal{V}$, thus $\mathcal{V}$ is a set of witnesses.
As $\tau_{\mathbf{r},\alpha}^{11}(\mathbf{z}) = (0,0,0,0)$ for all $\mathbf{z} \in \mathcal{V}$, $\beta$ has Property~($-$F).

For odd $d \ge 7$, Property~($-$F) does not hold since $T_{-\beta}^{d-1}(\frac{m}{\beta^2} + \frac{m}{\beta^3} + \frac{m}{\beta^4}-1) = \frac{m}{\beta^2} + \frac{m}{\beta^3} + \frac{m}{\beta^4}-1$, i.e., $\tau_{\mathbf{r},\alpha}^{d-1}(-1,0,0,-1,0,0,\dots,0) = (-1,0,0,-1,0,0,\dots,0)$.
For even~$d \ge 2$, we use the second condition of Theorem~\ref{thm:notf}, or that $\tau_{\mathbf{r},\alpha}(-1,\dots,-1) = (-1,\dots,-1)$.

Therefore, $\beta$ has Property ($-$F) if and only if $d\in\{1,3,5\}$.
\end{proof}

\section{Addition and subtraction} \label{sec:addition-subtraction}
In this section, we consider the lengths of fractional parts arising in the addition and subtraction of $(-\beta)$-integers; we prove the following theorem.

\begin{theorem} \label{t:fr}
Let $\beta > 1$ be a root of $x^3 - m \beta^2 - m \beta - m$, $m \ge 1$.
We have
\[
\max\{\operatorname{fr}(x\pm y):\, x, y \in \mathbb{Z}_{-\beta}\} = 3m + \begin{cases} 3 & \mbox{if $m=1$ or $m$ is even}, \\ 4 & \mbox{if $m \ge 3$ is odd}. \end{cases}
\]
\end{theorem}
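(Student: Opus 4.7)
}
The plan is to translate the problem, via Proposition~\ref{p:conjugacy2}, into a question about orbit lengths of the associated $\alpha$-SRS. For $\beta$ with minimal polynomial $x^3-mx^2-mx-m$ the relevant data are $\mathbf{r}=(m/\beta,\,-m/\beta-m/\beta^2)$ and $\alpha=\beta/(\beta+1)$, and under the conjugacy $\phi$ of Proposition~\ref{p:conjugacy2} the fractional length $\operatorname{fr}(z)$ of any $z\in\operatorname{Fin}(-\beta)$ equals the number of $\tau_{\mathbf{r},\alpha}$-iterations needed to reach $\boldsymbol{0}$ starting from the integer vector $\phi^{-1}((-\beta)^{-k}z)$, where $k$ is the unique integer with $(-\beta)^{-k}z\in[\ell_\beta,\ell_\beta+1)$ (the offset by~$k$ is absorbed by multiplication by~$(-\beta)^k$ afterwards, which does not change the fractional length). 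Property ($-$F) for $d=3$, established in Theorem~\ref{t:mdnacci}, guarantees that every such orbit does reach $\boldsymbol{0}$, so the question is well-posed.

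The next step is to pin down the finite set $\mathcal{S}\subset\mathbb{Z}^2$ of starting vectors that can actually arise from sums or differences of $(-\beta)$-integers. Since every $x\in\mathbb{Z}_{-\beta}$ has greedy digits in $\{0,1,\dots,m\}$, standard tail-sum estimates (using $\lfloor\beta\rfloor=m$ and $\beta<m+1+1/m$) show that after scaling $x\pm y$ into the fundamental domain the preimage $\phi^{-1}(\cdot)$ lives in an explicit box depending affinely on~$m$. I would then analyse $\tau_{\mathbf{r},\alpha}$ on this box: the action on $r_0,r_1$ scales roughly as multiplication by $-\beta\approx-(m+1)$, so each iteration ``shrinks'' a suitably weighted norm by a factor of $\beta$ until the orbit enters a bounded core around $\boldsymbol{0}$. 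This linear decay accounts for the dominant $3m$ term, the factor $3$ reflecting the relationship between $\lfloor\beta\rfloor=m$ and the alphabet $\{0,\dots,2m\}$ coming from unreduced addition, while the short tail inside the core gives the additive constant $3$ or $4$.

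The extremal orbits would then be exhibited explicitly. I would look for a single vector $\mathbf{v}_m^*\in\mathcal{S}$ whose $\tau_{\mathbf{r},\alpha}$-orbit has length exactly $3m+3$ for $m=1$ or even $m$, and $3m+4$ for odd $m\ge 3$; translating $\mathbf{v}_m^*$ back through $\phi$ yields explicit $(x,y)\in\mathbb{Z}_{-\beta}^2$ realising the maximum. The parity split comes from the fact that, for odd $m\ge 3$, the core contains an extra state reachable from $\mathcal{S}$ (a vector with a single nonzero coordinate of a particular sign) that is skipped when $m$ is even or $m=1$; this adds one more iteration before reaching $\boldsymbol{0}$. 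The main obstacle is the sharp upper bound: to prove that no vector in~$\mathcal{S}$ leads to an even longer orbit, one has to combine the quantitative shrinking estimate on the weighted norm with a case-by-case inspection of the finite core, handling even and odd~$m$ separately. I expect the lower bound (the explicit witnesses) to be straightforward once the right family is guessed, while the upper bound will require the most careful bookkeeping.
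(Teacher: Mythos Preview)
Your overall framework---pass to the $\alpha$-SRS via Proposition~\ref{p:conjugacy2} and bound orbit lengths---is correct, but two of the central steps are not yet under control.

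First, determining the set $\mathcal{S}$ of reachable starting vectors is the crux, and ``tail-sum estimates'' will not give it to you. The quantity $T_{-\beta}^k\bigl((x-y)/(-\beta)^k\bigr)$ is the carry produced after rewriting a word over $\{-m,\dots,m\}$ into a genuine $(-\beta)$-expansion; its exact range is not read off from a crude bound on $|x-y|$. The paper's device here is essential: one tracks $s_j=T_{-\beta}^j\bigl((x-y)/(-\beta)^k\bigr)+T_{-\beta}^j\bigl(y/(-\beta)^k\bigr)-T_{-\beta}^j\bigl(x/(-\beta)^k\bigr)$, which lives in $(\ell_\beta-1,\ell_\beta+2)$ rather than in the fundamental interval, extends $\phi$ to a map $\Phi$ on $\mathbb{Z}^2\times\{-1,0,1\}$, and introduces a set-valued lift $\tilde{\tau}_{\mathbf{r},\alpha}$ that also moves the extra ``height'' coordinate $h\in\{-1,0,1\}$. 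Only after proving that an explicit finite set $V\subset\mathbb{Z}^2\times\{-1,0,1\}$ is $\tilde{\tau}_{\mathbf{r},\alpha}$-invariant does one obtain the precise list of starting vectors $\{\mathbf{z}:(\mathbf{z},0)\in V\}$. Without this carry-tracking, you will overshoot $\mathcal{S}$ and cannot hope to land on the exact constants $3$ and $4$.

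Second, the orbit-length analysis in the paper is not a contraction argument but a purely combinatorial one: $\mathbb{Z}^2$ near the origin is partitioned into explicit diagonal strips $A_k,B_k,C_k,D_k,E_k,F_k$, and one checks (Lemma~\ref{l:Psi}) a closed formula for $\lfloor\mathbf{rz}+\alpha\rfloor$ on these strips, yielding the chain $F_{k+1}\to A_k\to B_k\to C_k\to D_k\to E_k\to F_{k-1}$. Six steps drop the index by~$2$, which is the actual source of the $3m$; your heuristic via $\lfloor\beta\rfloor=m$ versus the alphabet $\{0,\dots,2m\}$ does not capture this. A weighted-norm argument could at best give $O(m)$, not an exact count with the correct parity-dependent additive constant. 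For the lower bound, ``guessing the right family'' is likewise nontrivial: the paper constructs explicit $\tilde{\tau}$-paths (with specified $b\in\mathcal{B}$) from $(\mathbf{0},0)$ to the extremal points $(-m,-m-1,0)$ and $(-2,m-1,0)$ and then checks that the resulting digit strings are admissible $(-\beta)$-expansions. Finally, note that addition is not handled symmetrically: the paper shows $\max\operatorname{fr}(x+y)\le\max\operatorname{fr}(x-y)$ by a separate argument using $-\Phi(V)$, so treating $\pm$ uniformly, as your plan suggests, will not quite work.
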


Throughout the section, let $\beta$ be as in Theorem~\ref{t:fr}, $\mathbf{r} = (r_0,r_1) = (\frac{m}{\beta}, -\frac{m}{\beta}-\frac{m}{\beta^2})$ and $\alpha = \frac{\beta}{\beta+1}$.
Recall that $x, y \in \mathbb{Z}_{-\beta}$ means that $T_{-\beta}^k(\frac{x}{(-\beta)^k}) = 0 = T_{-\beta}^k(\frac{y}{(-\beta)^k})$, and $\operatorname{fr}(x\pm y) = n$ is the minimal $n \ge 0$ such that $T_{-\beta}^{k+n}(\frac{x\pm y}{(-\beta)^k}) = 0$, with $k \ge 0$ such that $\frac{x}{(-\beta)^k}, \frac{y}{(-\beta)^k}, \frac{x\pm y}{(-\beta)^k} \in (\ell_\beta,\ell_\beta+1)$.
To determine $\operatorname{fr}(x-y)$, set
\[
s_j = T_{-\beta}^j(\tfrac{x-y}{(-\beta)^k}) + T_{-\beta}^j(\tfrac{y}{(-\beta)^k}) - T_{-\beta}^j(\tfrac{x}{(-\beta)^k})
\]
for $j \ge 0$.
Then we have $s_j = T_{-\beta}^j(\frac{x-y}{(-\beta)^k})$ for $j \ge k$, and, for all $j \ge 0$,
\begin{gather*}
s_{j+1} \in -\beta s_j + \mathcal{B} \quad \mbox{with} \quad \mathcal{B} = -\mathcal{A}-\mathcal{A}+\mathcal{A} = \{-2m,-2m+1,\dots,m\}, \\[1ex]
s_j \in [\ell_\beta,\ell_\beta+1) + [\ell_\beta,\ell_\beta+1) - [\ell_\beta,\ell_\beta+1) = (\ell_\beta-1,\ell_\beta+2).
\end{gather*}
As $s_0 = 0$, we have $s_j \in \mathbb{Z}[\beta]$ for $j \ge 0$.
Therefore, we extend the bijection $\phi:\, \mathbb{Z}^2 \to \mathbb{Z}[\beta] \cap [\ell_\beta,\ell_\beta+1)$ to
\[
\Phi:\, \mathbb{Z}^2 \times \{-1,0,1\} \to \mathbb{Z}[\beta] \cap [\ell_\beta-1,\ell_\beta+2), \quad (\mathbf{z},h) \mapsto \mathbf{rz} - \lfloor \mathbf{rz} + \alpha \rfloor + h.
\]
Note that $\Phi(\mathbf{z},0) = \phi(\mathbf{z})$.

\begin{lemma}
Let $\mathbf{z} = (z_0,z_1) \in \mathbb{Z}^2$, $h \in \{-1,0,1\}$ and $b \in \mathcal{B}$.
Then
\[
- \beta\, \Phi(\mathbf{z},h) + b = \Phi\big(z_1, h-\lfloor\mathbf{rz}+\alpha\rfloor, \big(z_1-z_0-h+\lfloor\mathbf{rz} +\alpha\rfloor\big)\, m + \big\lfloor r_0z_1 + r_1h - r_1 \lfloor\mathbf{rz} +\alpha\rfloor + \alpha\big\rfloor + b\big).
\]
\end{lemma}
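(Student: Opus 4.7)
The plan is to verify this as an algebraic identity in $\mathbb{Z}[\beta]$, by expanding both sides through the definition of $\Phi$ and reducing everything to the minimal polynomial of $\beta$. Setting $n = \lfloor \mathbf{rz}+\alpha\rfloor$ so that $\Phi(\mathbf{z},h) = r_0 z_0 + r_1 z_1 - n + h$, I would first identify the right-hand side as $\Phi(\mathbf{z}',h')$ with $\mathbf{z}' = (z_1,\, h-n)$ and
\[
h' = (z_1 - z_0 - h + n)\, m + \lfloor r_0 z_1 + r_1 (h-n) + \alpha \rfloor + b.
\]
The floor hidden inside $h'$ then cancels exactly against the floor in the definition of $\Phi(\mathbf{z}', h')$, leaving the clean expression $\Phi(\mathbf{z}', h') = r_0 z_1 + r_1(h-n) + (z_1 - z_0 - h + n)\, m + b$.

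On the other side, from $r_0 = m/\beta$ and $r_1 = -m/\beta - m/\beta^2$ I would compute $-\beta r_0 = -m$ and $-\beta r_1 = m + r_0$, so that
\[
-\beta\, \Phi(\mathbf{z},h) + b \;=\; -m z_0 + (m + r_0) z_1 - \beta(h-n) + b \;=\; r_0 z_1 + m(z_1 - z_0) - \beta(h-n) + b.
\]
Subtracting the two expressions, the claim reduces to $(r_1 - m)(h - n) = -\beta(h - n)$, i.e., to $\beta = m - r_1$. But $m - r_1 = m + m/\beta + m/\beta^2$, which equals $\beta$ precisely by the minimal polynomial $\beta^3 - m\beta^2 - m\beta - m = 0$ (divided by $\beta^2$).

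I do not foresee any genuine obstacle: once the identification $\mathbf{z}' = (z_1, h - n)$ is made and the two floor terms are observed to cancel, everything boils down to the single identity $m - r_1 = \beta$, which is just the minimal polynomial in disguise. The only care required is bookkeeping of signs and of the integer shift $n$, which is transferred between the coordinates of the new state and the third argument via the summand $(z_1 - z_0 - h + n)\, m$. Note that the statement is purely algebraic, so one need not worry here about whether $h'$ falls in $\{-1,0,1\}$; that reduction is performed separately when the identity is applied with $s_j$ and $b$ chosen so that $s_{j+1} \in (\ell_\beta-1, \ell_\beta+2)$.
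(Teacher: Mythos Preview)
Your proposal is correct and follows essentially the same route as the paper: both compute $-\beta\,\Phi(\mathbf{z},h)+b$ directly using $-\beta r_0=-m$ and $-\beta r_1=m+r_0$, note that the floor term built into $h'$ cancels the floor in $\Phi(\mathbf{z}',h')$, and reduce the remaining verification to $\beta=m-r_1$, i.e., to the minimal polynomial. The paper's write-up is more compressed (it leaves the floor cancellation and the use of $\beta=m-r_1$ implicit), but the argument is the same.
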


\begin{proof}
We have
\begin{align*}
-\beta\, \Phi(\mathbf{z},h) + b & = -z_0 m + z_1 m + \frac{z_1m}{\beta} + \lfloor\mathbf{rz}+\alpha\rfloor \beta - h \beta + b\\
& = r_0z_1 + r_1\big(h-\lfloor\mathbf{rz}+\alpha\rfloor\big) + \big(z_1-z_0-h+\lfloor\mathbf{rz}+\alpha\rfloor\big)\, m + b.
\end{align*}
\end{proof}

Hence, we have $s_j \in \Phi(\tilde{\tau}_{\mathbf{r},\alpha}^j(\mathbf{0},0))$, where $\tilde{\tau}_{\mathbf{r},\alpha}$ extends $\tau_{\mathbf{r},\alpha}$ to a set-valued function by
\begin{multline*}
\tilde{\tau}_{\mathbf{r},\alpha}:\, \mathbb{Z}^2 \times \{-1,0,1\} \to \mathcal{P}(\mathbb{Z}^2 \times \{-1,0,1\}), \quad (\mathbf{z},h) \mapsto \big\{(z_1,h-\lfloor\mathbf{rz} +\alpha\rfloor,h'): \\
h' \in \{-1,0,1\} \cap
\big(\big(z_1-z_0-h+\lfloor\mathbf{rz} +\alpha\rfloor\big)\, m + \big\lfloor r_0z_1 + r_1h - r_1 \lfloor\mathbf{rz} +\alpha\rfloor +\alpha\big\rfloor + \mathcal{B}\big)\big\}.
\end{multline*}
To give a bound for the sets $\tilde{\tau}_{\mathbf{r},\alpha}^j(\mathbf{0},0)$, let
\begin{gather*}
A_k = \{(j,k):-1\le j<k\},\ B_k = \{(k,j):1\le j\le k\},\ C_k = \{(j,j-k):1\le j\le k\}, \\
D_k = \{(-j,-k):0\le j<k\},\ E_k = \{(-k,-j):2\le j\le k\}, \\
F_k = \{(-j,k-j):2\le j\le k+1\}.
\end{gather*}
Then $\bigcup_{k\ge0} \{A_k,B_k,C_k,D_k,E_k,F_k\}$ forms a partition of $\mathbb{Z}^2 \setminus \{(0,0),(-1,-1)\}$, with the sets $B_0$, $C_0$, $D_0$, $E_0$, $F_0$, and $E_1$ being empty, see Figure~\ref{f:V}.
If $m \ge 2$, then let
\begin{align*}
V &= \bigg( \bigcup_{0\le k\le m} \big( A_k \cup B_k \cup C_k \cup D_k \cup E_k \cup F_k \big) \times \{-1,0,1\} \bigg) \setminus \{(-1,m,1), (0,m,1)\} \\
& \cup \Big(\big(C_{m+1} \setminus \{(m+1,0)\}\big) \times \{1\}\Big) \cup \Big(D_{m+1} \times \{1\}\Big) \cup \Big(\big(D_{m+1} \setminus \{(0,-m-1)\}\big) \times \{0\}\Big)  \\
& \cup \big(D_{m+1} \setminus \{(0,-m-1),(-1,-m-1),(-2,-m-1)\}\big) \times \{-1\} \\
& \cup \Big(\big(\{(0,0),(-1,-1)\} \cup E_{m+1}\big) \setminus \{(-m-1,-m-1)\}\Big) \times \{-1,0,1\} \\
& \cup \Big( \big( F_{m+1} \setminus \{(-m-2,-1), (-m-1,0)\} \big) \times \{-1,0\} \Big).
\end{align*}
If $m = 1$, then we add the point $(-2,0,-1)$ to this set, i.e.,
\begin{multline*}
V = \big(\big\{(0,0), (1,1), (1,0), (0,-1), (-1,-1), (-1,0), (-2,-1)\big\} \times \{-1,0,1\}\big) \\
\cup \big(\big\{(-1,1), (0,1)\big\} \times \{-1,0\}\big) \cup \big(\{(-1,-2)\} \times \{0,1\}\big) \cup \big\{ (1,-1,1), (0,-2,1), (-2,0,-1)\big\}.
\end{multline*}
We call a point $\mathbf{z} \in \mathbb{Z}^2$ \emph{full} if $\{\mathbf{z}\} \times \{-1,0,1\} \subset V$.

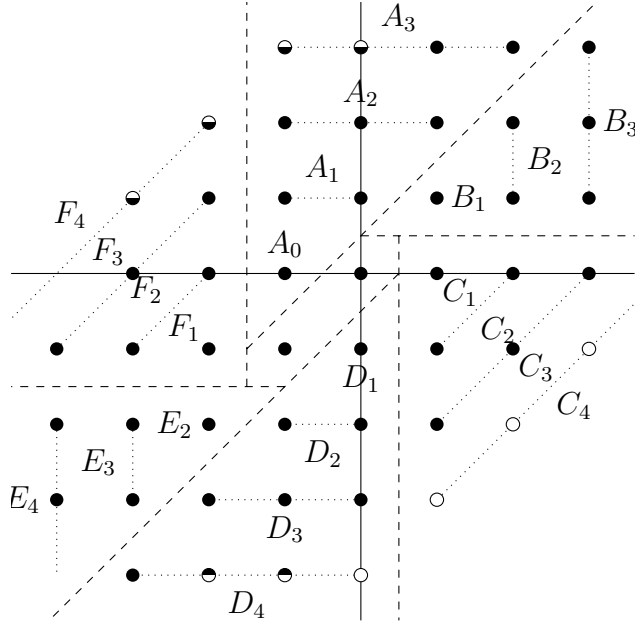
\begin{figure}[ht]
\centerline{\begin{tikzpicture}
\clip(-4.6,-4.6)rectangle(3.6,3.6);
\draw(-4.6,0)--(3.6,0) (0,-4.6)--(0,3.6);
\draw[dashed](-1.5,-1.5)--(-1.5,3.6) (-1.5,-1)--(3.1,3.6) (0,.5)--(3.6,.5) (.5,.5)--(.5,-4.6) (.5,0)--(-4.1,-4.6) (-1,-1.5)--(-4.6,-1.5);
\foreach \k in {0,1,2,3} \draw[dotted](-1,\k)--node[above=2pt]{$A_\k$}(\k-1,\k);
\foreach \k in {1,2,3} \draw[dotted](\k,1)--node[right=1pt]{$B_\k$}(\k,\k);
\foreach \k in {1,2,3,4} \draw[dotted](\k,0)--node[below right=-2pt]{$C_\k$}(1,1-\k);
\foreach \k in {1,2,3,4} \draw[dotted](0,-\k)--node[below=2pt]{$D_\k$}(1-\k,-\k);
\foreach \k in {2,3,4} \draw[dotted](-\k,-2)--node[left=2pt]{$E_\k$}(-\k,-\k);
\foreach \k in {1,2,3,4} \draw[dotted](-\k-1,-1)--node[above left=-1pt]{$F_\k$}(-2,\k-2);
\foreach \x/\y in {1/3, 2/3, 3/3, -1/2, 0/2, 1/2, 2/2, 3/2, -2/1, -1/1, 0/1, 1/1, 2/1, 3/1, -3/0, -2/0, -1/0, 0/0, 1/0, 2/0, 3/0, -4/-1, -3/-1, -2/-1, -1/-1, 0/-1, 1/-1, 2/-1, -4/-2, -3/-2, -2/-2, -1/-2, 0/-2, 1/-2, -4/-3, -3/-3, -2/-3, -1/-3, 0/-3, -3/-4} \fill(\x,\y)circle(2.5pt);
\foreach \x/\y in {3/-1, 2/-2, 1/-3, 0/-4} \filldraw[fill=white](\x,\y)circle(2.5pt);
\foreach \x/\y in {-3/1, -2/2, -1/3, 0/3} {\begin{scope}\filldraw[fill=white](\x,\y)circle(2.5pt); \clip(\x,\y)circle(2.5pt); \fill(\x-1,\y)rectangle(\x+1,\y-1);\end{scope}}
\foreach \x/\y in {-2/-4, -1/-4} {\begin{scope}\filldraw[fill=white](\x,\y)circle(2.5pt); \clip(\x,\y)circle(2.5pt); \fill(\x-1,\y)rectangle(\x+1,\y+1);\end{scope}}
\end{tikzpicture}}
\caption{The set $V$ for $m=3$. Full points are represented by disks, points $\mathbf{z}$ with $\{\mathbf{z}\} \times \{0,1\} \subset V$, $\{\mathbf{z}\} \times \{-1,0\} \subset V$ and $\{\mathbf{z}\} \times \{1\} \subset V$ by upper half-disks, lower half-disks and circles respectively.} \label{f:V}
\end{figure}

The following result is the key lemma of this section.

\begin{lemma} \label{l:PhiV}
Let $x,y \in [\ell_\beta,\ell_\beta+1)$ such that  $x-y \in [\ell_\beta,\ell_\beta+1)$.
Then $T_{-\beta}^j(x-y) + T_{-\beta}^j(y) - T_{-\beta}^j(x) \in \Phi(V)$ for all $j \ge 0$.
\end{lemma}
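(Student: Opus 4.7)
The plan is a forward-invariance argument by induction on $j$. We will show that for every $j \ge 0$ the element $s_j = T_{-\beta}^j(x-y) + T_{-\beta}^j(y) - T_{-\beta}^j(x)$ admits a preimage $(\mathbf{z}_j,h_j) \in V$ under $\Phi$. The base case $j=0$ is immediate from $s_0 = 0 = \Phi(\mathbf{0},0)$ together with the observation that $(\mathbf{0},0) \in V$, which holds for both the $m=1$ definition and the $m \ge 2$ definition of $V$ by inspection.

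For the inductive step, recall that $s_{j+1} = -\beta\, s_j + b_{j+1}$ for some $b_{j+1} \in \mathcal{B} = \{-2m, \dots, m\}$, because each of the three terms in $s_{j+1}$ is of the form $T_{-\beta}(w) = -\beta w - \lfloor -\beta w - \ell_\beta \rfloor$ with a digit in $\mathcal{A} = \{0,\dots,m\}$, and differences of such digits range over $\mathcal{B}$. The lemma immediately preceding the statement then yields
\[
s_{j+1} \in \Phi\bigl(\tilde{\tau}_{\mathbf{r},\alpha}(\mathbf{z}_j,h_j)\bigr),
\]
so it suffices to verify the set-theoretic invariance $\tilde{\tau}_{\mathbf{r},\alpha}(V) \subseteq V$.

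This invariance is established by an explicit case analysis: for each $(\mathbf{z},h) \in V$, writing $\mathbf{z} = (z_0,z_1)$, one first determines the integer $\lfloor \mathbf{rz} + \alpha \rfloor$ using $r_0 = m/\beta$, $r_1 = -m/\beta - m/\beta^2$, $\alpha = \beta/(\beta+1)$, and the relation $\beta^3 = m\beta^2 + m\beta + m$. The partition of $\mathbb{Z}^2$ into the families $A_k, B_k, C_k, D_k, E_k, F_k$ is chosen precisely so that $\lfloor \mathbf{rz} + \alpha \rfloor$ and the second floor term $\lfloor r_0 z_1 + r_1 h - r_1 \lfloor \mathbf{rz}+\alpha\rfloor + \alpha \rfloor$ appearing in the preceding lemma each take a single constant value on each region (or on each half of a region, once $h$ is fixed). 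With these integers in hand, the definition of $\tilde{\tau}_{\mathbf{r},\alpha}$ produces a finite list of candidates $h' \in \{-1,0,1\}$, and one checks that every resulting triple $(z_1,\, h - \lfloor \mathbf{rz}+\alpha\rfloor,\, h')$ lies in $V$.

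The main obstacle will be the bookkeeping on the outer boundary of $V$: images of points in $C_{m+1},\, D_{m+1},\, E_{m+1},\, F_{m+1}$ must not escape into a region $A_k, B_k, \dots$ with $k \ge m+2$, and it is precisely this requirement that forces the delicate asymmetries in the definition of $V$ (for instance the exclusions $(-1,m,1)$ and $(0,m,1)$, the partial inclusions along $D_{m+1}$ and $F_{m+1}$, and the extra point $(-2,0,-1)$ added for $m=1$). The sharp inequalities needed, notably $m < \beta < m+1$ and their refinements from $\beta^3 = m\beta^2+m\beta+m$, together with the a priori range $s_j \in (\ell_\beta-1,\ell_\beta+2)$, are what keep the case analysis finite and tractable. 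The case $m=1$ can additionally be verified by direct enumeration since the explicit $V$ has only a handful of elements, while $m \ge 2$ is handled by the uniform region-by-region pattern described above.
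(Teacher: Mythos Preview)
Your approach is essentially identical to the paper's: reduce to the forward invariance $\tilde{\tau}_{\mathbf{r},\alpha}(V)\subseteq V$ via the recursion $s_{j+1}=-\beta s_j+b_{j+1}$, and then verify invariance by a region-by-region case analysis using the floor formula of Lemma~\ref{l:Psi}. The paper actually carries out this analysis in seven explicit cases (one for $\{(0,0),(-1,-1)\}$ and one each for $A_k,\dots,F_k$), whereas your proposal only sketches it; note also that $\lfloor \mathbf{rz}+\alpha\rfloor$ is not literally constant on each region (for instance on $D_k$ it takes different values for $j\in\{0,1\}$ and $j\ge 2$), so the bookkeeping is a little finer than you indicate.
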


To prove Lemma~\ref{l:PhiV}, we first determine the value of $\lfloor \mathbf{rz}+\alpha\rfloor$ for $(\mathbf{z},h) \in V$.

\begin{lemma} \label{l:Psi}
Let $\mathbf{z} = (z_0,z_1) \in \mathbb{Z}^2$ with $-m-1 \le z_0 \le m$, $|z_1| \le m+1$ and $|z_0-z_1| \le m+1$.
Then
\[
\lfloor \mathbf{rz}+\alpha\rfloor = z_0 - z_1 + \begin{cases}0 & \mbox{if}\ z_0 \ge 0\ \mbox{or}\ z_1 \le z_0 = -1,\\ 1 & \mbox{if}\ z_0 \le -2\ \mbox{or}\ z_1 > z_0 = -1.\end{cases}
\]
\end{lemma}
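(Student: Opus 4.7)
The plan is to isolate $\mathbf{rz}+\alpha$ into an approximate integer part $z_0-z_1$ and a small remainder, then identify which unit interval around $z_0-z_1$ the sum lies in. Plugging in $\mathbf{r}=(m/\beta,\,-m/\beta-m/\beta^2)$, I would first rewrite $\mathbf{rz} = (m/\beta)(z_0-z_1) - mz_1/\beta^2$, and then exploit the defining identity $1 = m/\beta+m/\beta^2+m/\beta^3$ (equivalent to $\beta^3 = m\beta^2+m\beta+m$) via $m/\beta - 1 = -m(\beta+1)/\beta^3$. A short computation then yields
\[
\mathbf{rz}+\alpha \;=\; (z_0-z_1) + \delta, \qquad \delta \;=\; \alpha - \frac{mz_0}{\beta^2} - \frac{m(z_0-z_1)}{\beta^3}.
\]
The lemma thus reduces to showing $\delta \in [0,1)$ in the ``add $0$'' case and $\delta \in [1,2)$ in the ``add $1$'' case.

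Next I would split on $z_0$ into exactly the four subcases appearing in the statement: (a) $z_0 \ge 0$; (b) $z_0=-1$ with $z_1 \le -1$; (c) $z_0=-1$ with $z_1 \ge 0$; (d) $z_0 \le -2$. In each subcase I extract the extremal choices of $z_0$ and of $z_0-z_1$ that are compatible with the joint constraints $-m-1 \le z_0 \le m$, $|z_1|\le m+1$, $|z_0-z_1|\le m+1$, and bound $\delta$ above and below. After clearing denominators by $(\beta+1)\beta^3$ and substituting $\beta^3 = m(\beta^2+\beta+1)$, each required bound reduces to an elementary polynomial inequality in $\beta$ and $m$, of the type $\beta^2 > m\beta+1$, $\beta^2 \ge (m-2)\beta+m$, or $2\beta^2+1 > (m-1)\beta$. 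All of these follow from $\beta > m$ (immediate from $\beta^3 > m\beta^2$) together with the identity for $\beta^3$.

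The main obstacle is that $\delta-\alpha$ can grow roughly linearly in~$m$, so the naive triangle inequality does not pin $\delta$ to a single unit interval; one must work with the joint constraints on $(z_0,z_1)$ rather than with the individual ones. For example, in case~(d) at $z_0=-2$, the inequality $|z_0-z_1|\le m+1$ combined with $z_1 \ge -m-1$ actually forces $z_0-z_1 \le m-1$ rather than the naive $m+1$, and it is precisely this saving that keeps $\delta$ below $2$. The pinch points where the argument is tight are the borderline pairs $(z_0,z_1)=(0,-m-1)$ and $(0,m+1)$ in case~(a), $(-1,-1)$ in~(b), $(-1,m+1)$ in~(c), and $(-2,-m-1)$, $(-m-1,0)$ in~(d); at each of these one needs the full strength of the minimal polynomial relation $\beta^3 = m\beta^2+m\beta+m$ to close the bound.
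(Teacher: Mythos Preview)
Your approach is essentially the same as the paper's. The paper writes $z_0r_0+z_1r_1 = z_0-z_1 - z_0\frac{m}{\beta^2} + (z_1-z_0)\frac{m}{\beta^3}$ and compares the remainder $-z_0\frac{m}{\beta^2}+(z_1-z_0)\frac{m}{\beta^3}$ directly to the thresholds $-\frac{\beta}{\beta+1}$ and $\frac{1}{\beta+1}$, which is exactly your comparison of $\delta=\alpha+(\text{this remainder})$ to $0$, $1$, $2$; the four-case split on $z_0$ is identical, and the extremal configurations you list match the paper's bounds (e.g.\ in your case~(d) the paper also uses $-z_0\ge 2$ together with $z_1\ge -m-1$ to get the lower bound $2\frac{m}{\beta^2}-(m-1)\frac{m}{\beta^3}$).
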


\begin{proof}
 We have $z_0 r_0 + z_1 r_1 = z_0 - z_1 - z_0 \frac{m}{\beta^2} + (z_1-z_0) \frac{m}{\beta^3}$ and
\[
\frac{-\beta}{\beta+1} < - \frac{m^2}{\beta^2} - \frac{(m+1)m}{\beta^3} \le - \frac{z_0m}{\beta^2} + \frac{(z_1-z_0)m}{\beta^3} \le \frac{(m+1)m}{\beta^2} + \frac{(m+1)m}{\beta^3} < 1 + \frac{1}{\beta+1},
\]
thus $\lfloor \mathbf{rz}+\alpha\rfloor \in z_0 - z_1 + \{0,1\}$.

If $z_0 \ge 0$, then we have $- z_0 \frac{m}{\beta^2} + (z_1-z_0) \frac{m}{\beta^3} \le (m+1) \frac{m}{\beta^3} < \frac{1}{\beta+1}$.
If $z_1 \le z_0 = -1$, then $- z_0 \frac{m}{\beta^2} + (z_1-z_0) \frac{m}{\beta^3} \le \frac{m}{\beta^2} < \frac{1}{\beta+1}$.
This shows that $\lfloor \mathbf{rz}+\alpha\rfloor = z_0 - z_1$ in these two cases.

If $z_1 > z_0 = -1$, then we have $- z_0 \frac{m}{\beta^2} + (z_1-z_0) \frac{m}{\beta^3} \ge \frac{m}{\beta^2} + \frac{m}{\beta^3} > \frac{1}{\beta+1}$.
Finally, if $z_0 \le -2$, then $- z_0 \frac{m}{\beta^2} + (z_1-z_0) \frac{m}{\beta^3} \ge 2 \frac{m}{\beta^2} - (m-1) \frac{m}{\beta^3} > \frac{1}{\beta+1}$, thus $\lfloor \mathbf{rz}+\alpha\rfloor = z_0 - z_1 + 1$ in the latter cases.
\end{proof}

\begin{proof}[Proof of Lemma~\ref{l:PhiV}]
We have already seen above that $T_{-\beta}^j(x-y) + T_{-\beta}^j(y) - T_{-\beta}^j(x) \in \Phi(\tilde{\tau}_{\mathbf{r},\alpha}^j(\mathbf{0},0))$.
As $(\mathbf{0},0) \in V$, it suffices to show that $\tilde{\tau}_{\mathbf{r},\alpha}(V) \subseteq V$.

Let $(\mathbf{z},h) \in V$ and $b \in B$ such that
\[
h' = (z_1-z_0+\lfloor\mathbf{rz} +\alpha\rfloor-h\big)\, m + \big\lfloor r_0z_1 + r_1h - r_1 \lfloor\mathbf{rz} +\alpha\rfloor +\alpha\big\rfloor + b \in \{-1,0,1\},
\]
i.e., $(z_1,h-\lfloor \mathbf{rz}+\alpha\rfloor,h') \in \tilde{\tau}_{\mathbf{r},\alpha}(\mathbf{z},h)$.
If $(z_1,h-\lfloor \mathbf{rz}+\alpha\rfloor)$ is full, then we clearly have $\tilde{\tau}_{\mathbf{r},\alpha}(\mathbf{z},h) \subset V$.
Otherwise, we have to consider the possible values of~$h'$.
We distinguish seven cases.

\begin{enumerate}
\item
$\mathbf{z} \in \{(0,0),(-1,-1)\}$: We have $\lfloor \mathbf{rz}+\alpha\rfloor = 0$.

If $m \ge 2$, then $(0,h)$ and $(-1,h)$ are full since $(0,-1) \in D_1$, $(0,1), (-1,1) \in A_1$, $(-1,0) \in A_0$, and $(0,0)$, $(-1,-1)$ are also full.

If $m = 1$, then $(0,-1)$, $(0,0)$, $(-1,-1)$, $(-1,0)$ are full.
For $h=1$, we have  $h' = -1 + {\lfloor r_0 z_1 + r_1 + \alpha\rfloor} + b = b-2$, thus $h' = 1$.
The points $(z_1,1,-1)$ for $z_1\in\{-1,0\}$ are in~$V$.

\item
$\mathbf{z} = (j,k) \in A_k$:
We have $\lfloor \mathbf{rz}+\alpha\rfloor = -k$ for $j = -1$ and $\lfloor \mathbf{rz}+\alpha\rfloor = j-k$ for $0 \le j < k$.

If $k=0$, then $\mathbf{z} = (-1,0)$, and $(0,h)$ is full for $m \ge 2$.
If $m=1$, then $(0,0)$ and $(0,-1)$ are full, and $h=1$ gives that $h' = \lfloor r_1 + \alpha\rfloor + b = b-1 \in \{-1,0\}$, thus $\tilde{\tau}_{\mathbf{r},\alpha}(\mathbf{z},1) \subset V$.

If $1 \le k < m$, then $(k,h+k)$, $(k,h-j+k)$ lie in $B_k \cup C_k \cup \{(k,k+1)\}$ and are full.

If $k = m$, then we have either $h \in \{-1,0\}$, thus $(m,h+m)$ and $(m,h-j+m)$ lie in the set of full points $B_m \cup C_m$, or $h = 1$ and $1 \le j < m$, in which case $(m,1-j+m) \in B_m$ is also full. (Note that $(-1,m,1),(0,m,1)\notin V$.)

\item $\mathbf{z} = (k,j) \in B_k$:
We have $\lfloor \mathbf{rz}+\alpha\rfloor = k-j$, $1 \le j \le k$.

For $h \in \{0,1\}$, the point $(j,h+j-k)$ is in $C_k$ and $C_{k-1} \cup B_k$ respectively, hence full.
The point $(j,j-k-1) \in C_{k+1}$ is full if $k < m$.
Finally, if $k=m$ and $h=-1$, then
$h' = m + \lfloor r_0 j + r_1 (j-m-1) + \alpha\rfloor + b = 2m+1 + b = 1$, and $(j,j-m-1,1) \in V$.

\item $\mathbf{z} = (j,j-k) \in C_k$:
We have $\lfloor \mathbf{rz}+\alpha\rfloor = k$, $1 \le j \le k$.

The point $(j-k,h-k)$, $h \in \{-1,0,1\}$, is in $D_{k+1}$, $D_k$, and $D_{k-1} \cup E_{k-1} \cup \{(0,0),(-1,-1)\}$ respectively, hence full for all $k < m$, $k \le m$, and $k \le m+1$ respectively.
It remains to consider $h=-1$, $k=m$.
For $1 \le j \le m-3$, the point $(j-m,-m-1)$ is full; we have
\[
h' = m + \lfloor r_0(j-m) - r_1 (m+1) + \alpha\rfloor + b = \begin{cases}2m+1+b = 1 & \mbox{if}\ j=m, \\ 2m+b \in \{0,1\} & \mbox{if}\ j \in \{m-2,m-1\},\end{cases}
\]
$(0,-m-1,1) \in V$, and $\{(j-m,-m-1)\} \times \{0,1\} \subset V$ for $\max(1,m-2) \le j < m$.

\item $\mathbf{z} = (-j,-k) \in D_k$:
We have $\lfloor \mathbf{rz}+\alpha\rfloor = k-j$ if $j \in \{0,1\}$, $\lfloor \mathbf{rz}+\alpha\rfloor = k-j+1$ if $2 \le j < k$.

Let first $k=1$, i.e., $\mathbf{z} = (0,-1)$.
The point $(-1,h-1)$ lies in $D_2 \cup \{(-1,-1)\} \cup A_0$ and is full, except if $m=1$, $h=-1$; in the latter case, we have $h' = 1 + \lfloor -r_0 - 2r_1 + \alpha\rfloor + b = b+2 \in \{0,1\}$, and $\{(-1,-2)\} \times \{0,1\} \in V$.

For $2 \le k \le m$, the points $(-k,h+j-k)$, $j \in \{0,1\}$, and $(-k,h+j-k-1)$, $2 \le j < k$, lie in $\{(-k,-i):\, 0\le i \le k+1\}$, and are full, except for $k = m = 2$, $h=-1$, $j=0$; in the latter case, we have $h' = 2 + \lfloor -2r_0 - 3r_1 + \alpha\rfloor + b = b+4 \in \{0,1\}$, and $\{(-2,-3)\} \times \{0,1\} \in V$.

Finally, for $k=m+1$, we have $j=0$, $h=1$, or $1 \le j \le \min(m,2)$, $h \in \{0,1\}$, or $3 \le j \le m$, $h \in \{-1,0,1\}$, thus the points $(-m-1,h+j-m-1)$, $j \in \{0,1\}$, and $(-m-1,h+j-m-2)$, $2 \le j \le m$, lie in $\{(-m-1,-i):\, \min(m-1,1) \le i \le m\}$ and are full, except for $m=j=h=1$; in the latter case, we have $h' = -1 + \lfloor -2r_0 + \alpha\rfloor + b = b-2 = -1$, and $(-2,0,-1) \in V$.

\item $\mathbf{z} = (-k,-j) \in E_k$:
We have $\lfloor \mathbf{rz}+\alpha\rfloor = j-k+1$, $2 \le j \le k$.

The point $(-j, h-j+k-1) \in F_{k-2} \cup F_{k-1} \cup F_k \cup \{(-k,-2)\}$ is full, except for $k = m+1$, $h = 1$; in the latter case, we have $2 \le j \le m$, $h' = \lfloor -r_0j + r_1(m-j+1) + \alpha\rfloor + b = b-m \in \{-1,0\}$, and $\{(-j,m-j+1)\} \times \{-1,0\} \subset V$.

\item $\mathbf{z} = (-j,k-j) \in F_k$:
We have $\lfloor \mathbf{rz}+\alpha\rfloor = 1-k$, $2 \le j \le k+1$.

If $1 \le k \le m$, then the point $(k-j,h+k-1) \in A_{k-2} \cup A_{k-1} \cup A_k \cup \{(k-2,k-2)\}$ is full, except for $k = m$, $j \in \{m, m+1\}$, $h = 1$; in the latter case, we have $h' = \lfloor r_0(m-j) + r_1m+\alpha \rfloor + b = b-m \in \{-1,0\}$, and $\{(m-j,m)\} \times \{-1,0\} \subset V$.

If $k=m+1$, then $2 \le j \le m$, $h \in \{-1,0\}$, or $m=1$, $j=2$, $h=-1$, and ${(m+1-j,h+m)} \in A_{m-1} \cup A_m \cup \{(m-1,m-1)\}$ is full. \qedhere
\end{enumerate}
\end{proof}

\begin{lemma} \label{l:TV}
For the following chains of sets, $\tau_{\mathbf{r},\alpha}$ maps elements of a set into its successor:
\begin{gather*}
C_k \setminus \{(m+1,0)\} \to D_k \to E_k \to F_{k-1} \quad (3 \le k \le m+1), \\
F_{k+1} \to A_k \to B_k \to C_k \to D_k \quad (1 \le k \le m).
\end{gather*}
On the remaining $\mathbf{z} = (z_0,z_1) \in \mathbb{Z}^2$ with $-m-1 \le z_0 \le m$, $-m-1 \le z_1 \le m$ and $|z_0-z_1| \le m+1$, $\tau_{\mathbf{r},\alpha}$ acts by
\begin{gather*}
(0,-2) \mapsto (-2,-2) \mapsto (-2,-1) \mapsto (-1,0) \mapsto (0,0), \\ (-1,-2) \mapsto (-2,-1), \quad
(0,-1) \mapsto (-1,-1) \mapsto (-1,0).
\end{gather*}
\end{lemma}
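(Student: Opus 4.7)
The plan is to verify every asserted transition by direct computation, using the closed form for $\lfloor\mathbf{rz}+\alpha\rfloor$ provided by Lemma~\ref{l:Psi} together with the identity $\tau_{\mathbf{r},\alpha}(z_0,z_1) = (z_1, -\lfloor\mathbf{rz}+\alpha\rfloor)$. Before invoking Lemma~\ref{l:Psi} I would check that every source point in the chains satisfies its hypotheses $-m-1 \le z_0 \le m$, $|z_1| \le m+1$, $|z_0-z_1| \le m+1$; inspection of the definitions of $A_k,\ldots,F_k$ shows that these bounds hold throughout the specified ranges of~$k$, with the single exception of $(m+1,0) \in C_{m+1}$, whose first coordinate exceeds~$m$. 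This is exactly why that point has to be removed from the first chain.

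Next I would process the two chains set by set. For a generic $(j,k) \in A_k$, Lemma~\ref{l:Psi} yields $\lfloor\mathbf{rz}+\alpha\rfloor = j-k$ when $j \ge 0$ and $-k$ when $j=-1$ (the case $z_1 > z_0 = -1$); in both cases $\tau_{\mathbf{r},\alpha}(j,k) = (k,k-j) \in B_k$. Analogous one-line calculations give $\tau_{\mathbf{r},\alpha}(k,j) = (j,j-k) \in C_k$ on $B_k$, $\tau_{\mathbf{r},\alpha}(j,j-k) = (j-k,-k) \in D_k$ on $C_k \setminus \{(m+1,0)\}$, and $\tau_{\mathbf{r},\alpha}(-j,k+1-j) = (k+1-j,k) \in A_k$ uniformly on $F_{k+1}$ (where $z_0 \le -2$ holds throughout, so the same branch of Lemma~\ref{l:Psi} is used). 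The only step that truly splits into subcases is $D_k \to E_k$, where the first coordinate may be $0$, $-1$, or $\le -2$; the three subcases fit together into $E_k$, and the restriction $k \ge 3$ in the first chain arises precisely from forcing the $j=1$ subcase to land in $E_k$ rather than outside. The step $E_k \to F_{k-1}$ is uniform since $z_0 = -k \le -2$ there.

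Finally, the isolated transitions at the bottom of the statement are each settled by one application of Lemma~\ref{l:Psi}: for instance $\tau_{\mathbf{r},\alpha}(-2,-2) = (-2,-1)$ comes from $z_0 \le -2$ giving floor $1$; $\tau_{\mathbf{r},\alpha}(-1,0) = (0,0)$ comes from $z_1 > z_0 = -1$ giving floor $0$; and similarly for $(0,-2)$, $(-1,-2)$, $(0,-1)$, $(-1,-1)$. These handle exactly the low-index sets $D_1,D_2,E_2,F_1,A_0$ which the two chains do not map further, so together with the chains they describe $\tau_{\mathbf{r},\alpha}$ on every point of the box satisfying the hypotheses of Lemma~\ref{l:Psi}.

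The main obstacle is purely bookkeeping: ensuring that the membership conditions on the target sets are respected at every boundary value of~$j$ and~$k$, and that Lemma~\ref{l:Psi}'s hypotheses are verified before each use. Once the indexing conventions for $A_k,\ldots,F_k$ are fixed and the three branches of Lemma~\ref{l:Psi} are tabulated, no further ideas enter the proof.
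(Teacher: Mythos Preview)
Your approach is the same as the paper's --- direct computation of $\tau_{\mathbf{r},\alpha}(z_0,z_1)=(z_1,-\lfloor\mathbf{rz}+\alpha\rfloor)$ using Lemma~\ref{l:Psi} --- and your case analysis is correct as far as it goes. However, your claim that ``these bounds hold throughout the specified ranges of~$k$, with the single exception of $(m+1,0)\in C_{m+1}$'' is wrong. The set $F_{m+1}=\{(-j,m+1-j):2\le j\le m+2\}$ appears in the second chain (take $k=m$), and its rightmost element $(-m-2,-1)$ has $z_0=-m-2<-m-1$, so Lemma~\ref{l:Psi} does not apply to it. Your sentence ``$\tau_{\mathbf{r},\alpha}(-j,k+1-j)=(k+1-j,k)\in A_k$ uniformly on $F_{k+1}$ (where $z_0\le -2$ holds throughout, so the same branch of Lemma~\ref{l:Psi} is used)'' therefore has a gap at this one point.

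The paper singles out exactly this point and computes the floor directly: one checks
\[
\tfrac{1}{\beta+1} < \tfrac{(m+2)m}{\beta^2}+\tfrac{(m+1)m}{\beta^3} < 1+\tfrac{1}{\beta+1},
\]
which gives $\lfloor\mathbf{rz}+\alpha\rfloor=-m$ and hence $\tau_{\mathbf{r},\alpha}(-m-2,-1)=(-1,m)\in A_m$, consistent with the chain $F_{m+1}\to A_m$. (Incidentally, the \emph{formula} from the $z_0\le -2$ branch of Lemma~\ref{l:Psi} happens to give the right value here, but that lemma's proof uses $z_0\ge -m-1$ in its lower bound, so you cannot simply quote it.) Once you patch this one point, your argument is complete and matches the paper's.
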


\begin{proof}
This is a direct consequence of Lemma~\ref{l:Psi}, except for $(-m-2,-1) \in F_{m+1}$; see also the proof of Lemma~\ref{l:PhiV}.
As $\frac{1}{\beta+1} < \frac{(m+2)m}{\beta^2} + \frac{(m+1)m}{\beta^3} < 1 + \frac{m}{\beta^2} < 1 + \frac{1}{\beta+1}$, the proof of Lemma~\ref{l:Psi} shows that $\tau_{\mathbf{r},\alpha}(-m-2,-1) = (-1,m) \in A_m$.
\end{proof}

\begin{proposition} \label{p:frn}
We have
\[
\max\{\operatorname{fr}(x-y):\, x, y \in \mathbb{Z}_{-\beta}\} = 3m + \begin{cases} 3 & \mbox{if $m=1$ or $m$ is even}, \\ 4 & \mbox{if $m \ge 3$ is odd}. \end{cases}
\]
\end{proposition}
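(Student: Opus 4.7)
The proof has two directions: exhibiting $(x, y) \in \mathbb{Z}_{-\beta}^2$ realizing the claimed length, and showing no pair can exceed it. For both directions I use the translation of Lemma~\ref{l:PhiV}: the value $\operatorname{fr}(x-y)$ equals the length of the final arc, from some reachable intermediate state $s_k$ back to $((0, 0), 0)$, in a $\tilde\tau_{\mathbf{r},\alpha}$-trajectory in $V$ that begins at $s_0 = ((0, 0), 0)$. Equivalently, $\operatorname{fr}(x-y)$ is the length of a simple return path to $((0, 0), 0)$ that is reachable from $((0, 0), 0)$.

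For the lower bound I would exhibit a concrete trajectory of the claimed length. The key tool is Lemma~\ref{l:TV}, which shows that once a state with $h = 0$ lies in the chain region, $\tilde\tau$ deterministically follows
\begin{displaymath}
\cdots \to A_k \to B_k \to C_k \to D_k \to E_k \to F_{k-1} \to A_{k-2} \to \cdots,
\end{displaymath}
decreasing the index by $2$ every $6$ steps. Entering the chain as high as the constraints of $V$ allow, namely at $D_{m+1}$ for even $m$ and at $F_{m+1}$ for odd $m \ge 3$, and descending to $((0, 0), 0)$, produces simple return paths of length exactly $3m + 3$ and $3m + 4$ respectively. When the chain bottoms out at $A_1$ (for odd $m$) rather than $A_0$ (for even $m$), the extra tail $D_1 \to (-1, -1) \to (-1, 0) \to (0, 0)$ from the ``remaining'' transitions in Lemma~\ref{l:TV} takes over; this is the source of the parity distinction. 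The case $m = 1$ is handled directly by exhibiting an explicit six-step cycle. Reachability of the starting state from $((0, 0), 0)$ is verified by a short initial portion of the trajectory using branches of $\tilde\tau$ with $h = \pm 1$.

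For the upper bound I would show that every simple return path to $((0, 0), 0)$ in $V$ has length at most the stated value. Inside the chain region with $h = 0$, the bound follows directly from the chain structure of Lemma~\ref{l:TV}, since the chain index cannot exceed $m+1$ in $V$ and strictly decreases along each six-step block. The remaining states of $V$, namely those with $h \neq 0$ together with the few exceptional points singled out in the definition of $V$, form a finite set whose $\tilde\tau$-images can be enumerated by hand: every such transition either joins the chain at some level $k \le m+1$ or leads directly to the short termination sequence ending at $((0, 0), 0)$. Combining these yields the matching upper bound.

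The main obstacle is the bookkeeping for transitions of $\tilde\tau$ from states with $h \neq 0$, where Lemma~\ref{l:TV} does not directly apply, together with the parity-driven case split: the chain degenerates differently at its bottom end when $E_1$ and $F_0$ are empty, so odd $m$ must exit through the special transitions via $(-1, -1)$, whereas even $m$ exits cleanly through $A_0$. Translating the extremal trajectory back into the explicit $(-\beta)$-expansions $\langle x\rangle_{-\beta}$ and $\langle y\rangle_{-\beta}$ via $\Phi$ is routine but lengthy.
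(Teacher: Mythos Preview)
Your overall strategy matches the paper's, but there is a genuine gap in the upper-bound argument. You propose to bound \emph{all} ``simple return paths to $((0,0),0)$ in~$V$'' under the set-valued map~$\tilde{\tau}_{\mathbf{r},\alpha}$, including transitions through states with $h\neq 0$. This cannot give the sharp bound: $V$~has on the order of~$m^2$ elements, and simple $\tilde{\tau}$-paths in~$V$ can be far longer than~$3m+4$. The observation you are missing is that at step~$k$ the second and third summands in~$s_k$ vanish, so $s_k = T_{-\beta}^k\big(\tfrac{x-y}{(-\beta)^k}\big)\in[\ell_\beta,\ell_\beta+1)$ and hence $s_k=\Phi(\mathbf{z},0)$ with $h=0$. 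From that moment on the evolution is the \emph{deterministic} map~$\tau_{\mathbf{r},\alpha}$ on~$\mathbb{Z}^2$, not the multi-valued~$\tilde{\tau}$. The upper bound therefore reduces to bounding the $\tau$-orbit length from each~$\mathbf{z}$ with $(\mathbf{z},0)\in V$, and that is precisely what the chain structure of Lemma~\ref{l:TV} delivers. States with $h\neq 0$ play no role whatsoever in the upper bound; they are relevant only to the lower bound, where they are used to certify that the extremal~$\mathbf{z}$ is reachable.

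Two smaller remarks on the lower bound. First, the reachability paths from $((0,0),0)$ to the target states in $D_{m+1}\times\{0\}$ (for even~$m$) and $F_{m+1}\times\{0\}$ (for odd~$m\ge 3$) are not ``short'': they have length proportional to~$m$ and must be constructed explicitly, threading through intermediate levels $k=0,2,4,\ldots$ via states with $h=\pm1$. Second, translating a $\tilde{\tau}$-path back to an actual pair $x,y\in\mathbb{Z}_{-\beta}$ is not automatic: each transition label $b\in\mathcal{B}$ must be decomposed as $-d_1-d_2+d_3$ with $d_i\in\mathcal{A}$ in such a way that the resulting digit strings for $x$, $y$, and $x-y$ are all \emph{admissible} $(-\beta)$-expansions. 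The paper handles this by writing out the explicit expansions and checking them against the lexicographic condition given by $d_{-\beta}(\ell_\beta)=m0m^\omega$; you should expect to do the same rather than declare it routine.
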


\begin{proof}
Let $k \ge 0$ be such that $\frac{x}{(-\beta)^k}, \frac{y}{(-\beta)^k}, \frac{x-y}{(-\beta)^k} \in (\ell_\beta,\ell_\beta+1)$.
Then $\operatorname{fr}(x-y) = n$ is the minimal $n \ge 0$ such that $T_{-\beta}^{k+n}(\frac{x-y}{(-\beta)^k}) = 0$.
Let $\mathbf{z} \in \mathbb{Z}^2$ be such that $T_{-\beta}^k(\frac{x- y}{(-\beta)^k}) = \phi(\mathbf{z})$.
Then $\operatorname{fr}(x-y)$ is the minimal $n \ge 0$ such that $\tau_{\mathbf{r},\alpha}^n(\mathbf{z}) = 0$, and  we have $(\mathbf{z}, 0) \in V$, i.e.,
\begin{multline*}
\mathbf{z} \in \{(0,0),(-1,-1),(0,-1)\} \cup \bigcup_{0\le k\le m} \big( A_k \cup B_k \cup C_k \cup D_{k+1} \cup E_{k+1} \cup F_{k+1}\big) \\
\setminus \{(0,-m-1), (-m-1,-m-1), (-m-2,-1), (-m-1,0)\}.
\end{multline*}
Therefore, $\operatorname{fr}(x-y)$ is bounded by the maximal length of the path from $\mathbf{z}$ to $(0,0)$ given by Lemma~\ref{l:TV}.

For $1 \le k \le m/2$, the sets $F_{2k+1}$, $A_{2k}$, $B_{2k}$, and~$C_{2k}$ are mapped to~$D_2$ in $6k-2$, $6k-3$, $6k-4$, and $6k-5$ steps respectively.
For $2 \le k \le (m+1)/2$, the sets $D_{2k}$ and~$E_{2k}$ are mapped to~$D_2$ in $6k-6$ and $6k-7$ steps respectively.
The points $(0,-2)$ and $(-1,-2)$ in $D_2$ are mapped to $(0,0)$ in 4 and 3 steps respectively.

Similarly, for $1 \le k \le (m+1)/2$, the sets $F_{2k}$, $A_{2k-1}$, $B_{2k-1}$, and~$C_{2k-1}$ are mapped to $D_1 = \{(0,-1)\}$ in $6k-2$, $6k-3$, $6k-4$, and $6k-5$ steps respectively.
For $1 \le k \le m/2$, the sets $D_{2k+1}$ and~$E_{2k+1}$ are mapped to~$D_1$ in $6k$ and $6k-1$ steps respectively.
Finally, the point $(0,-1)\in D_1$ is mapped to $(0,0)$ in 3~steps.

For even~$m$, the longest path comes thus from~$D_{m+1}$ and has length~$3m+3$.
For odd $m \ge 3$, the longest path comes from $F_{m+1}$ and has length $3m+4$.
For $m = 1$, the longest path comes from~$A_1$ (since $F_2 \times \{0\} \cap V = \emptyset$ in this case) and has length~6.
This proves the upper bound for $\operatorname{fr}(x-y)$.

For $m = 1$, this bound is attained by $x = 1-\beta$, $y = \beta^4 - \beta^3$, since
 $\operatorname{fr}(x-y) = \operatorname{fr}(\frac{1}{\beta^3}-\beta^3) = \operatorname{fr}(\frac{1}{\beta^3}) = 6$.
Assume in the following that $m \ge 2$.
Then the points $(-m,-m-1,0) \in D_{m+1} \times \{0\}$ and ${(-2,m-1,0)} \in F_{m+1} \times \{0\}$ are in $\tilde{\tau}_{\mathbf{r},\alpha}^j(0,0,0)$ for sufficiently large $j$ because they can be
attained from $(0,0,0)$ by transitions
\[
(\mathbf{z},h) \stackrel{b}{\longrightarrow} \big(z_1, h-\lfloor\mathbf{rz}+\alpha\rfloor, (z_1-z_0+\lfloor\mathbf{rz} +\alpha\rfloor-h\big)\, m + \big\lfloor r_0z_1 + r_1h - r_1 \lfloor\mathbf{rz} +\alpha\rfloor +\alpha\big\rfloor + b\big)
\]
with $b \in B$ by the following paths (cf.\ the proof of Lemma~\ref{l:PhiV}):
\begin{gather*}
(0,k,0) \stackrel{-1}{\longrightarrow} (k,k,-1) \stackrel{-m-k-2}{\longrightarrow} (k,-1,-1) \stackrel{-m-k-2}{\longrightarrow} (-1,-k-2,-1), \quad (0 \le k \le m-2) \\
(-1,-k,-1) \stackrel{-m}{\longrightarrow} (-k,-k,1) \stackrel{k}{\longrightarrow} (-k,0,1) \stackrel{k}{\longrightarrow} (0,k,0), \qquad (2 \le k \le m) \\
(0,m-1,0) \stackrel{-1}{\longrightarrow} (m-1,m-1,-1) \stackrel{-2m}{\longrightarrow} (m-1,-1,0) \stackrel{-m}{\longrightarrow} (-1,-m,-1), \\
(0,m,0) \stackrel{0}{\longrightarrow} (m,m,0) \stackrel{-m}{\longrightarrow} (m,0,0) \stackrel{-m-1}{\longrightarrow} (0,-m,-1),  \\
(0,-m,-1) \stackrel{-m-2}{\longrightarrow} (-m,-m-1,0) \stackrel{-1}{\longrightarrow} (-m-1,-2,1) \stackrel{m}{\longrightarrow} (-2,m-1,0).
\end{gather*}
For even $m \ge 2$, these paths correspond to
\begin{multline*}
\operatorname{fr}(0 0 0 0 2 2\, 0 0 0 0 4 4\, \cdots\, 0 0 0 0 m m\, 0 0 0 0 \bullet 0^\omega - 0 2 2 0 0 0\, 0 4 4 0 0 0\, \cdots\, 0 m m 0 0 0\, 0 0 1 2 \bullet 0^\omega) \\
= \operatorname{fr}((1 m m m 0 0)^{m/2}\, 0 m m m\bullet d_{-\beta}(\phi(-m,-m-1)))
= \operatorname{fr}(\phi(-m,-m-1)) = 3m+3;
\end{multline*}
for the second equality, we have used that $(1 m m m 0 0)^{m/2}\, 0 m m m \bullet  {d_{-\beta}(\phi(-m,-m-1))}$ is a $(-\beta)$-expansion.
Indeed, this follows from the lexicographic conditions given in~\cite{IS09} since $d_{-\beta}(\ell_\beta) = m0m^\omega$ and $d_{-\beta}(\phi(-m,-m-1))$ starts with~$2$ (as $-\beta \phi(-m,-m-1) - 2 = \phi(-m-1,-2)$).
For odd $m \ge 3$, we have
\begin{multline*}
\operatorname{fr}(0 0 0 0 2 2\, 0 0 0 0 4 4\, \cdots\, 0 0 0 0 (m{-}1) (m{-1})\, 0 0 0 0 m m\, 0 0 0 0 0 m \bullet 0^\omega \\
- 0 2 2 0 0 0\, 0 4 4 0 0 0\, \cdots\, 0 (m{-}1) (m{-}1) 0 0 0\, 0 m 0 0 0 0\, 0 0 1 2 0 0 \bullet 0^\omega) \\
= \operatorname{fr}((1 m m m 0 0)^{(m+1)/2}\, 0 m m m 1 0\bullet d_{-\beta}(\phi(-2,m-1)))
= \operatorname{fr}(\phi(-2,m-1)) = 3m+4.
\end{multline*}
This concludes the proof of the proposition.
\end{proof}

\begin{proposition} \label{p:frp}
We have
\[
\max\{\operatorname{fr}(x+y):\, x, y \in \mathbb{Z}_{-\beta}\}\leq \max\{\operatorname{fr}(x-y):\, x, y \in \mathbb{Z}_{-\beta}\}.
\]
\end{proposition}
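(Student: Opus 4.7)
My plan is to adapt the dynamical analysis of Proposition~\ref{p:frn} to the sum $x+y$ and reduce the resulting system to the subtraction setting already handled, via the sign-flip $s\mapsto -s$. With $k\ge0$ chosen so that $\tfrac{x}{(-\beta)^k},\tfrac{y}{(-\beta)^k},\tfrac{x+y}{(-\beta)^k}\in(\ell_\beta,\ell_\beta+1)$, I would set
\[
s_j = T_{-\beta}^j\!\big(\tfrac{x+y}{(-\beta)^k}\big) - T_{-\beta}^j\!\big(\tfrac{x}{(-\beta)^k}\big) - T_{-\beta}^j\!\big(\tfrac{y}{(-\beta)^k}\big), \quad j\ge0.
\]
An argument identical to the one for subtraction shows $s_0=0$, $s_{j+1}\in-\beta s_j+\mathcal{B}^+$ with $\mathcal{B}^+=-\mathcal{A}+\mathcal{A}+\mathcal{A}=\{-m,\ldots,2m\}=-\mathcal{B}$, and $s_j\in(-\ell_\beta-2,-\ell_\beta+1)$; moreover, $\operatorname{fr}(x+y)$ is the least $n\ge0$ with $s_{k+n}=0$.

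The key step is the substitution $\tilde s_j:=-s_j$, which yields $\tilde s_0=0$, $\tilde s_{j+1}\in -\beta\tilde s_j+\mathcal{B}$ (since $-\mathcal{B}^+=\mathcal{B}$), and $\tilde s_j\in(\ell_\beta-1,\ell_\beta+2)$ — precisely the abstract conditions governing the subtraction sequences in the proof of Proposition~\ref{p:frn}. The proofs of Lemmas~\ref{l:PhiV} and~\ref{l:TV} are purely combinatorial, resting on the closure property $\tilde\tau_{\mathbf{r},\alpha}(V)\subseteq V$ and on the structure of $\tau_{\mathbf{r},\alpha}$ on $\mathbb{Z}^2$, and never use the fact that the sequence originated from a concrete subtraction. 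Applying them to $(\tilde s_j)$ gives $\tilde s_j\in\Phi(V)$ for all $j$, and the path-length estimate from the proof of Proposition~\ref{p:frn} bounds the first~$n\ge0$ with $\tilde s_{k+n}=0$ by $L_-:=\max\{\operatorname{fr}(x'-y'):x',y'\in\mathbb{Z}_{-\beta}\}$. Since $\tilde s_{k+n}=0\iff s_{k+n}=0$, this gives $\operatorname{fr}(x+y)\le L_-$.

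The main obstacle is a small but essential verification at time~$k$: unlike in the genuine subtraction case, the state of $(\tilde s_j)$ in the $\Phi$-representation need not live in the $h=0$ slice of~$V$. Indeed, $\tilde s_k=-T_{-\beta}^k(\tfrac{x+y}{(-\beta)^k})\in(-\ell_\beta-1,-\ell_\beta]$ overlaps both $[\ell_\beta,\ell_\beta+1)$ and $[\ell_\beta+1,\ell_\beta+2)$, so the corresponding $(\mathbf{w},h)\in V$ may have $h\in\{0,1\}$. One must therefore check that the path-length bound of Lemma~\ref{l:TV} also applies to starting states $\mathbf{w}$ from the $h=1$ part of $V$. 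This is immediate: Lemma~\ref{l:TV} describes $\tau_{\mathbf{r},\alpha}$ on $\mathbb{Z}^2$ without reference to~$h$, and inspection of the explicit description of $V$ shows that the $\mathbf{w}$-projections of $V\cap(\mathbb{Z}^2\times\{1\})$ sit inside the sets $A_k,B_k,\ldots,F_k$ already handled in the proof of Proposition~\ref{p:frn}, so the same bound $L_-$ applies.
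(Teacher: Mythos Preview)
Your sign-flip $\tilde s_j=-s_j$ is exactly what the paper does: applying Lemma~\ref{l:PhiV} to the pair $X=\tfrac{x+y}{(-\beta)^k}$, $Y=\tfrac{y}{(-\beta)^k}$ (so $X-Y=\tfrac{x}{(-\beta)^k}$) gives precisely $\tilde s_j\in\Phi(V)$. So up to the final paragraph your outline matches the paper's argument.

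The gap is in your last step. Write $T_{-\beta}^k(\tfrac{x+y}{(-\beta)^k})=\phi(\mathbf z)$. Then $\tilde s_k=-\phi(\mathbf z)=\Phi(\mathbf w,h)$, and since $\{1,r_0,r_1\}$ is a $\mathbb Z$-basis of $\mathbb Z[\beta]$, comparing coefficients forces $\mathbf w=-\mathbf z$. For $j\ge k$ it is $\mathbf z$ that evolves under $\tau_{\mathbf r,\alpha}$, not $\mathbf w$: the $\tilde\tau_{\mathbf r,\alpha}$-successor of $(\mathbf w,h)$ has second coordinate $h-\lfloor\mathbf r\mathbf w+\alpha\rfloor$, which agrees with $\tau_{\mathbf r,\alpha}(\mathbf w)$ only when $h=0$. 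Hence ``the path-length bound of Lemma~\ref{l:TV} applies to starting states $\mathbf w$ from the $h=1$ part of $V$'' is not the relevant check; you need the bound for $\mathbf z=-\mathbf w$.

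This is not a triviality, because $V$ is not symmetric under negation. Concretely, $(\mathbf w,h)\in V$ with $h\in\{0,1\}$ allows $-\mathbf w\in D_{m+1}\cup\{(-m-1,-j):1\le j\le m\}\cup\{(-j,m+1-j):1\le j\le m\}$, so $\mathbf z$ may land in $A_{m+1}$ or $B_{m+1}$, which are \emph{not} among the starting states handled in the proof of Proposition~\ref{p:frn}. The paper deals with exactly this: it first rules out $-\mathbf z=(-j,m+1-j)$ by an inequality on $\Phi$, and then computes $\lfloor r_0(m+1)+r_1 j+\alpha\rfloor$ to show
\[
A_{m+1}\setminus\{(-1,m+1)\}\ \to\ B_{m+1}\setminus\{(m+1,m+1)\}\ \to\ C_m,
\]
so these extra points reach $(0,0)$ in the same number of steps as $A_m$ and $B_m$, hence within $L_-$. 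That missing calculation is the substantive content of the proposition; once you insert it (applied to $\mathbf z=-\mathbf w$, not to $\mathbf w$), your argument becomes the paper's proof.
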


\begin{proof}
Let $\mu = \max\{\operatorname{fr}(x-y):\, x, y \in \mathbb{Z}_{-\beta}\}$.
For $x,y \in \mathbb{Z}_{-\beta}$, $\operatorname{fr}(x+y)$ is the minimal $n \ge 0$ such that $T_{-\beta}^{k+n}(\frac{x+y}{(-\beta)^k}) = 0$, with $k \ge 0$ such that $\frac{x}{(-\beta)^k}, \frac{y}{(-\beta)^k}, \frac{x+y}{(-\beta)^k} \in (\ell_\beta,\ell_\beta+1)$.
By Lemma~\ref{l:PhiV}, we have
\[
T_{-\beta}^j(\tfrac{x}{(-\beta)^k}) + T_{-\beta}^j(\tfrac{y}{(-\beta)^k}) - T_{-\beta}^j(\tfrac{x+y}{(-\beta)^k}) \in \Phi(V)
\]
for all $j \ge 0$, thus $T_{-\beta}^k(\frac{x+y}{(-\beta)^k}) \in -\Phi(V)$.
Therefore, we have $T_{-\beta}^k(\frac{x+y}{(-\beta)^k}) = \phi(\mathbf{z}) = - \Phi(-\mathbf{z}, h)$ for some $\mathbf{z} = (z_0,z_1) \in \mathbb{Z}^2$ and $h \in \{0,1\}$ with $(-\mathbf{z},h) \in V$.

If $(\mathbf{z},0) \in V$, then the proof of Proposition~\ref{p:frn} shows that $\tau_{\mathbf{r},\alpha}^\mu(\mathbf{z}) = \mathbf{0}$, thus $\operatorname{fr}(x+y) \le \mu$.

Assume now that $(\mathbf{z},0) \notin V$.
Then
\[
-\mathbf{z} \in D_{m+1} \cup \big\{(-m-1,-j):\, 1 \le j \le m\big\} \cup \big\{(-j,m-j+1):\, 1 \le j \le m\big\}.
\]
We can exclude $-\mathbf{z} = (-j,m-j+1)$, $1 \le j \le m$, because this would imply $h=0$ and
\[
-\Phi(-\mathbf{z},h) = 1-\frac{jm}{\beta^2} - \frac{(m+1)m}{\beta^3} \ge \frac{m}{\beta} - \frac{m^2}{\beta^3} > \frac{1}{\beta+1}.
\]
This means that $\mathbf{z} \in (A_{m+1} \cup B_{m+1}) \setminus \{(-1,m+1), (m+1,m+1)\}$.
With the notation of Lemma~\ref{l:TV}, we have
\[
A_{m+1} \setminus \{(-1,m+1)\} \to B_{m+1} \setminus \{(m+1,m+1)\} \to C_m,
\]
where we have used Lemma~\ref{l:Psi} and that $\lfloor r_0(m+1)+r_1j+\alpha\rfloor = m-j$ for $1 \le j \le m$, as
\[
-\frac{\beta}{\beta+1} < \frac{m}{\beta} - \frac{m^2}{\beta^2} + \frac{m-m^2}{\beta^3} \le \frac{m}{\beta} - \frac{m^2}{\beta^2} + \frac{(j-m)m}{\beta^3} \le \frac{m}{\beta} - \frac{m^2}{\beta^2} < \frac{1}{\beta+1}.
\]
Hence, the points in $A_{m+1} \setminus \{(-1,m+1)\}$ and $B_{m+1} \setminus \{(m+1,m+1)\}$ are mapped to $(0,0)$ in the same number of steps as those in $A_m$ and~$B_m$ respectively, thus $\operatorname{fr}(x+y) \le \mu$.
\end{proof}

Now, Theorem~\ref{t:fr} is an immediate consequence of Propositions~\ref{p:frn} and~\ref{p:frp}.

\begin{remark}
It is also possible to determine the exact value of $\max\{\operatorname{fr}(x+y):\, x, y \in \mathbb{Z}_{-\beta}\}$ in the same fashion as in the proof of Proposition~\ref{p:frn}; we have
\[
\max\{\operatorname{fr}(x+y):\, x, y \in \mathbb{Z}_{-\beta}\} = 3m + \begin{cases} 1 & \mbox{if}\ m=2, \\ 2 & \mbox{if $m \ge 4$ is even}, \\ 3 & \mbox{if}\ m=1, \\4 & \mbox{if $m \ge 3$ is odd}. \end{cases}
\]
\end{remark}

\end{document}